\newtheorem{theorem}{Theorem}[section]
\newtheorem{lemma}[theorem]{Lemma}
\newtheorem{proposition}[theorem]{Proposition}
\newtheorem{definition}[theorem]{Definition}
\theoremstyle{definition}
\newtheorem{example}[theorem]{Example}
\newtheorem{remark}[theorem]{Remark}
\newtheorem{textalgorithm}[theorem]{Algorithm}
\newcommand{\ts}{\hspace*{0.1em}} 
\newcommand{\subfiguretitle}[1]{{\scriptsize{#1}} \\[0.25ex]}
\newcommand{\R}{\mathbb{R}}                                     
\newcommand{\innerprod}[2]{\left\langle #1,\, #2 \right\rangle} 
\newcommand{\dd}{\mathrm{d}}                                    
\newcommand{\pp}[1]{\mathbb{#1}}                                
\providecommand{\norm}[1]{\left\lVert #1 \right\rVert}          
\newcommand\xqed[1]{\leavevmode\unskip\penalty9999 \hbox{}\nobreak\hfill \quad\hbox{#1}}
\newcommand{\exampleSymbol}{\xqed{$\blacktriangle$}} 
\newcommand{\inspace}{\mathbb{X}} 
\newcommand{\outspace}{\mathbb{Y}} 
\newcommand{\rkhs}[1][]{\mathbb{H}_\mathit{\scriptscriptstyle #1}} 
\newcommand{\id}{I} 
\newcommand{\idop}{\mathcal{I}} 
\newcommand{\ebd}[1][]{
   \ifthenelse{\equal{#1}{}}{\mathcal{E}}{\mathcal{E}_{#1}}}
\newcommand{\pf}[1][]{
   \ifthenelse{\equal{#1}{}}{\mathcal{P}}{\mathcal{P}_{#1}}}
\newcommand{\epf}[1][]{
   \ifthenelse{\equal{#1}{}}{\widehat{\mathcal{P}}}{\widehat{\mathcal{P}}_{#1}}}
\newcommand{\ko}[1][]{
   \ifthenelse{\equal{#1}{}}{\mathcal{K}}{\mathcal{K}_{#1}}}
\newcommand{\eko}[1][]{
   \ifthenelse{\equal{#1}{}}{\widehat{\mathcal{K}}}{\widehat{\mathcal{K}}_{#1}}}
\newcommand{\cov}[1][]{\mathcal{C}_\mathit{\scriptscriptstyle #1}} 
\newcommand{\ecov}[1][]{\widehat{\mathcal{C}}_\mathit{\scriptscriptstyle #1}} 
\newcommand{\gram}[1][]{G_\mathit{\scriptscriptstyle #1}} 
\DeclareMathOperator{\mspan}{span}
\definecolor{boxback}{gray}{0.95}
\begin{document}

\title{\large Kernel methods for detecting coherent structures in dynamical data}

\author{Stefan Klus}%
\email[]{stefan.klus@fu-berlin.de}
\affiliation{Department of Mathematics and Computer Science, \mbox{Freie Universit\"at Berlin, 14195 Berlin, Germany}}

\author{Brooke E. Husic}
\email[]{b.husic@fu-berlin.de}
\affiliation{Department of Mathematics and Computer Science, \mbox{Freie Universit\"at Berlin, 14195 Berlin, Germany}}
\affiliation{Department of Chemistry, \mbox{Stanford University, Stanford, CA, 94305, USA}}

\author{Mattes Mollenhauer}
\email[]{mattes.mollenhauer@fu-berlin.de}
\affiliation{Department of Mathematics and Computer Science, \mbox{Freie Universit\"at Berlin, 14195 Berlin, Germany}}

\author{Frank No\'e}
\email[]{frank.noe@fu-berlin.de}
\affiliation{Department of Mathematics and Computer Science, \mbox{Freie Universit\"at Berlin, 14195 Berlin, Germany}}

\begin{abstract}
We illustrate relationships between classical kernel-based dimensionality reduction techniques and eigendecompositions of empirical estimates of \emph{reproducing kernel Hilbert space} (RKHS) operators associated with dynamical systems. In particular, we show that kernel \emph{canonical correlation analysis} (CCA) can be interpreted in terms of kernel transfer operators and that it can be obtained by optimizing the \emph{variational approach for Markov processes} (VAMP) score. As a result, we show that coherent sets of particle trajectories can be computed by kernel CCA. We demonstrate the efficiency of this approach with several examples, namely the well-known Bickley jet, ocean drifter data, and a molecular dynamics problem with a time-dependent potential. Finally, we propose a straightforward generalization of \emph{dynamic mode decomposition} (DMD) called \emph{coherent mode decomposition} (CMD). Our results provide a generic machine learning approach to the computation of coherent sets with an objective score that can be used for cross-validation and the comparison of different methods.
\end{abstract}

\maketitle

\begin{quotation}
While coherent sets of particles are common in dynamical systems, they are notoriously challenging to identify. In this article, we leverage the combination of a suite of methods designed to approximate the eigenfunctions of transfer operators with kernel embeddings in order to design an algorithm for detecting coherent structures in Langrangian data. It turns out that the resulting method is a well-known technique to analyze relationships between multidimensional variables, namely kernel canonical correlation analysis. Our algorithm successfully identifies coherent structures in several diverse examples, including oceanic currents and a molecular dynamics problem with a moving potential. Furthermore, we show that a natural extension of our algorithm leads to a coherent mode decomposition, a counterpart to dynamic mode decomposition.
\end{quotation}

\section{Introduction}

Representing and learning effective, low-dimensional manifolds for complex, high-dimensional data is one of the cornerstones of machine learning and complex systems theory. In particular, for dynamical processes such as turbulent flows or molecular dynamics, it is known that much of their essential, long-time dynamics can be captured by linear models acting on low-dimensional manifolds, resulting in simpler, interpretable models, and potentially large savings in process simulation, prediction, and control. Due to their simplicity, linear methods to find low-dimensional subspaces are widely used, including \emph{principal component analysis} (PCA)~\cite{Hotelling33:PCA}, \emph{canonical correlation analysis} (CCA)~\cite{Hotelling_Biometrika36_CCA}, \emph{independent component analysis} (ICA)~\cite{Hyvarinen00:ICA}, \emph{time-lagged independent component analysis} (TICA) \cite{MS94, PPGDN13}, \emph{time-lagged canonical correlation analysis} (TCCA) \cite{WuNo17}, and \emph{dynamic mode decomposition} (DMD) \cite{SS08}.

Since the sought manifold is usually nonlinear in the direct state representation of the system, it is important to generalize the aforementioned methods to work in nonlinear feature spaces. Two particularly important classes of learning methods that go beyond applying linear algorithms to user-defined feature functions are neural network approaches\cite{MPWN:vampnets,LiEtAl_Chaos17_EDMD_DL,OttoRowley_LinearlyRecurrentAutoencoder} and kernel methods such as kernel PCA~\cite{Scholkopf98:KPCA}, kernel CCA~\cite{MRB01:CCA}, kernel ICA~\cite{Bach03:KICA}, kernel TICA~\cite{HZHM03:kTICA}, and kernel EDMD \cite{WRK15}. The basic idea of kernel methods is to represent data by elements in reproducing kernel Hilbert spaces associated with positive definite kernel functions.

The novel contribution of this work is to derive kernel methods for the identification of coherent structures in high-dimensional dynamical data. We do this by establishing deep mathematical connections between kernel methods that have
been proposed in machine learning and dynamical systems theory. Our main results are:
\begin{enumerate}[wide, itemindent=\parindent, itemsep=0ex, topsep=0.5ex]
\item We show that kernel CCA, when applied to dynamical data, admits a natural interpretation in terms of kernel transfer operators and that the resulting eigenvalue problems are directly linked to methods for the computation of coherent sets. Importantly, kernel CCA predates recent methods for coherent set identification.
\item We show kernel CCA is optimal in the variational approach for Markov processes
(VAMP) \cite{WuNo17}. Therefore, kernel CCA optimally approximates the
transfer operator singular values and functions within kernel methods, and VAMP is
a suitable optimization method for identifying coherent sets.
\item We propose a new method called \emph{coherent mode decomposition}, which can be seen as a combination of CCA and DMD.
\end{enumerate}

Establishing similar connections between machine learning and dynamical systems theory have previously led to advances in different applications: 
By means of the variational approach of conformation dynamics (VAC) \cite{NoNu13}, optimal estimators for the leading eigenfunctions of reversible time-homogeneous 
transfer operators have been made, which are important to identify metastable sets and rare events \cite{SchuetteFischerHuisingaDeuflhard_JCompPhys151_146,Bovier06:metastability}. This insight has led to the introduction of the TICA method as a way to identify slow collective variables to molecular dynamics  \cite{PPGDN13,SP13}---a key step in the modeling of rare events in molecules. Kernel embeddings of conditional probability distributions\cite{SHSF09,MFSS16} have been related to Perron--Frobenius and Koopman transfer operators~\cite{Ko31, LaMa94} and their eigenvalue decomposition in Ref.~\onlinecite{KSM17}.
In a similar way, optimization of the VAMP score can be used to derive CCA as an optimal linear algorithm to approximate the singular functions of transfer operators~\cite{WuNo17}. Doing the same with neural networks as function approximators leads to VAMPnets~\cite{MPWN:vampnets}. Here we show that a similar connection can be made with kernel methods and transfer operator singular functions and demonstrate that the kernel CCA algorithm approximates these functions. By exploiting that the singular functions are simultaneously the eigenfunctions of the forward-backward dynamics, we can extend this framework to the identification of so-called \emph{coherent sets}---a generalization of metastable sets to nonautonomous and aperiodic systems~\cite{FJ18:coherent}. Coherent sets are regions of the state space that are not dispersed over a specific time interval. That is, if we let the system evolve, elements of a coherent set will, with a high probability, stay close together, whereas other regions of the state space might be distorted entirely. A large number of publications investigate the numerical approximation of coherent sets with other methods, e.g., Refs.~\onlinecite{FrSaMo10:coherent, FrJu15, WRR15, HKTH16:coherent, BK17:coherent, HSD18, FJ18:coherent}, see Ref.~\onlinecite{AP15:review} for an overview of approaches for Lagrangian data. We will not address the problem of possibly sparse or incomplete data. Our goal is to illustrate relationships with established kernel-based approaches and to show that existing methods---developed independently and with different applications in mind, predating many algorithms for the computation of finite-time coherent sets---can be directly applied to detect coherent sets in Lagrangian data.

\begin{figure*}
    \centering
    \includegraphics[width=0.8\textwidth]{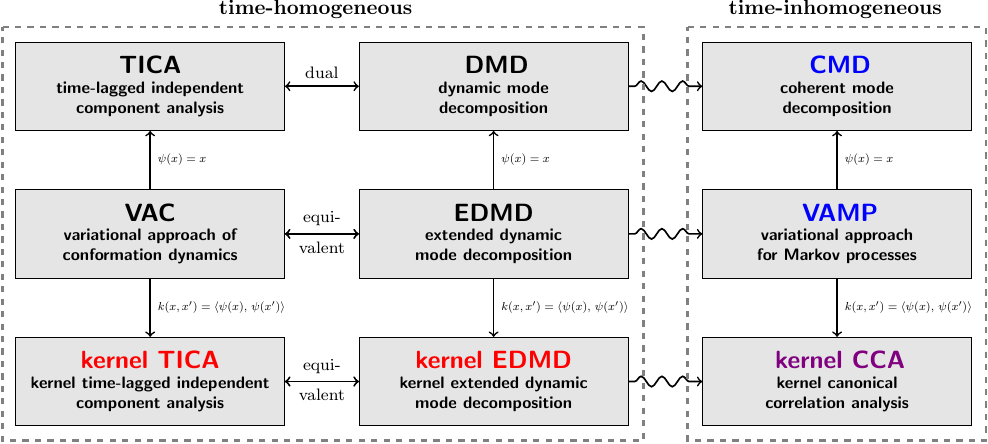}
    \caption{Overview of data-driven methods for the identification of slow collective variables, rare events, or coherent structures.}
    \label{fig:Relationships}
\end{figure*}

A high-level overview of data-driven approaches for the approximation of transfer operators that are relevant for our considerations and relationships with the methods proposed below are shown in Figure~\ref{fig:Relationships}. For the derivations of some of these methods the system is assumed to be reversible and there are other subtle differences, which will not be discussed here.  A comparison of methods for time-homogeneous systems can be found in Ref.~\onlinecite{KNKWKSN18} and extensions to time-inhomogeneous systems in Ref.~\onlinecite{KWNS18:noneq}. Moreover, it was shown that the role played by the eigenfunctions in the time-homogeneous setting (for the detection of metastable sets) is assumed by the left- and right singular functions in the time-inhomogeneous setting (for the detection of coherent sets)\cite{KWNS18:noneq}. The right singular functions encode information about the system at initial time $ t $ and the left singular functions correspond to the system's state at final time $ t + \tau $. The connections between kernel CCA and the singular value decomposition of transfer operators will be described in more detail below.

The remainder of this paper is structured as follows: In Section~\ref{sec:Prerequisites}, we will briefly introduce transfer operators and review the notion of positive definite kernels and induced Hilbert spaces as well as nonlinear generalizations of covariance and cross-covariance matrices. We will then define empirical RKHS operators and show that diverse algorithms can be formulated as eigenvalue problems involving such operators. The relationships between kernel CCA and coherent sets will be studied in Section~\ref{sec:Kernel CCA and coherent sets}. Furthermore, coherent mode decomposition will be derived. Section~\ref{sec:Numerical results} contains numerical results illustrating how to use the presented kernel-based methods for the analysis of dynamical systems. We conclude with a summary of the main results and open problems in Section~\ref{sec:Conclusion}.

\section{Prerequisites}
\label{sec:Prerequisites}

We briefly introduce transfer operators, reproducing kernel Hilbert spaces, and operators mapping from one such space to another one (or itself). For more details on the properties of these spaces and the introduced operators, we refer the reader to Refs.~\onlinecite{Schoe01, Steinwart2008:SVM, SC04:KernelMethods} and Refs.~\onlinecite{Baker70:XCov, Baker1973, KSM17}, respectively.

\subsection{Transfer operators}
\label{ssec:transfer_operators}
Let $ \{ X_t \}_{t \ge 0} $ be a stochastic process defined on the state space $ \inspace \subset \R^d $ and let $ \tau $ be a fixed lag time. We assume that there exists a \emph{transition density function} $ p_\tau \colon \inspace \times \inspace \rightarrow \R $ such that $ p_\tau(y \mid x) $ is the probability of $ X_{t + \tau} = y $ given $ X_t = x $.
For $ 1 \leq r \leq \infty$, let $L^r(\inspace)$ denote the standard space of $r$-Lebesgue integrable functions on $\inspace$. Then, for a probability density $\mu$ on $\inspace$, let $L^r_\mu (\inspace)$ be the spaces of $r$-integrable functions with respect to the corresponding probability measure induced by the density $\mu$; that is, $\norm{f}^r_{L^r_\mu(\inspace)}  = \int |f(x)|^r \ts \mu(x) \ts \dd x$.

Given a probability density $ p \in L^1(\inspace) $ and an observable $ f \in L^\infty(\inspace)$, we define the \emph{Perron--Frobenius operator} $ \pf \colon L^1(\inspace) \to L^1(\inspace) $ and the \emph{Koopman operator} $ \ko \colon L^{\infty}(\inspace) \to L^{\infty}(\inspace) $ by
\begin{align*}
    \left(\pf p \right)(y) &= \int p_\tau(y \mid x) \ts p(x) \ts \dd x, \\
    \left(\ko f\right)(x) &= \int p_\tau(y \mid x) \ts f(y) \ts \dd y.
\end{align*}
Assuming the process admits a unique equilibrium density $ \pi $, i.e., $ \pf \pi = \pi $, we can define for $ u = \pi(x)^{-1} \ts p(x) $ the \emph{Perron--Frobenius operator with respect to the equilibrium density} $ \mathcal{T} \colon L_\pi^1(\inspace) \to L_\pi^1(\inspace) $ as
\begin{equation*}
   \left(\mathcal{T} u\right)(y) = \frac{1}{\pi(y)} \int p_\tau(y \mid x) \ts \pi(x) \ts u(x) \ts \dd x.
\end{equation*}
Under certain conditions, these transfer operators can be defined on $ L^r(\inspace) $ and $L^r_\pi(\inspace)$ for other choices of $r$.
From now on, we will always assume that they are well-defined for $ r = 2 $ (see Refs.~\onlinecite{LaMa94, BaRo95, KKS16} for details). This is common whenever Hilbert space properties are needed in the context of transfer operators.

\begin{remark}
For time-homogeneous systems, the associated transfer operators depend only on the lag time $ \tau $. If the system is time-inhomogeneous, on the other hand, the lag time is not sufficient to parametrize the evolution of the system since it also depends on the starting time. This is described in detail in Ref.~\onlinecite{KWNS18:noneq}. The transition density and the operators thus require two parameters; however, we will omit the starting time dependence for the sake of clarity.
\end{remark}

\subsection{Reproducing kernel Hilbert spaces}

Given a set $ \inspace $ and a space $ \mathbb{H} $ of functions $ f \colon \inspace \to \R $, $ \mathbb{H} $~is called a \emph{reproducing kernel Hilbert space (RKHS)} with inner product $ \innerprod{\cdot}{\cdot}_\mathbb{H} $ if there exists a function $ k \colon \inspace \times \inspace \to \R $ with the following properties:
\begin{enumerate}[label=(\roman*), itemsep=0ex, topsep=1ex]
\item $ \innerprod{f}{k(x, \cdot)}_\mathbb{H} = f(x) $ for all $ f \in \mathbb{H} $, and
\item $ \mathbb{H} = \overline{\mspan\{k(x, \cdot) \mid x \in \inspace \}} $.
\end{enumerate}
The function $ k $ is called a \emph{kernel} and the first property above the \emph{reproducing property}. A direct consequence is that $ \innerprod{k(x, \cdot)}{k(x^\prime, \cdot)}_\mathbb{H} = k(x, x^\prime) $. That is, the map $ \phi \colon \inspace \rightarrow \rkhs $ given by $ x \mapsto k(x, \cdot) $ can be regarded as a feature map associated with $ x $, the so-called \emph{canonical feature map}.\!\footnote{Such a feature map $ \phi \colon \inspace \rightarrow \rkhs $ 
admitting the property $ k(x, x^\prime) = \innerprod{\phi(x)}{\phi(x^\prime)}_\mathbb{H} $
is not uniquely defined. There are other feature space representations such as, for instance, the Mercer feature space.\!\cite{Mercer, Schoe01, Steinwart2008:SVM} As long as we are only interested in kernel evaluations, however, it does not matter which one is considered.}
It is thus possible to represent data by functions in the RKHS. Frequently used kernels include the polynomial kernel and the Gaussian kernel, given by $ k(x, x^\prime) = (c + x^\top x^\prime)^p $ and $ k(x, x^\prime) = \exp(-\|x-x^\prime\|_2^2/2\sigma^2) $, respectively. While the feature space associated with the polynomial kernel is finite-dimensional, the feature space associated with the Gaussian kernel is infinite-dimensional; see, e.g., Ref.~\onlinecite{Steinwart2008:SVM}. Inner products in these spaces, however, are not evaluated explicitly, but only implicitly through kernel evaluations. This is one of the main advantages of kernel-based methods \cite{Scholkopf98:KPCA, SC04:KernelMethods}. Algorithms that can be purely expressed in terms of inner product evaluations can thus be easily \emph{kernelized}, resulting, as described above, in nonlinear extensions of methods such as PCA, CCA, or TICA.

\subsection{Covariance operators and Gram matrices}

Let $(X,Y)$ be a random variable on $ \inspace \times \outspace $, where $ \inspace \subset \R^{d_x} $ and $ \outspace \subset \R^{d_y} $. The dimensions $ d_x $ and $ d_y $ can in principle be different. For our applications, however, the spaces $ \inspace $ and $ \outspace $ are often identical. The associated marginal distributions are denoted by $\pp{P}_x(X)$ and $\pp{P}_y(Y)$, the joint distribution by $\pp{P}(X,Y)$, and the corresponding densities---which we assume exist---by $ p_x(x) $, $ p_y(y) $, and $ p(x, y) $, respectively. Furthermore, let $k$ and $l$ be the kernels associated with $ \inspace $ and $ \outspace$ and $\phi $ and $\psi$ the respective feature maps. We will always assume that requirements such as measurability of the kernels and feature maps as well as separability of the RKHSs are satisfied.\!\footnote{In most cases, these properties follow from mild assumptions about $ \inspace $ and $ \outspace $. For an in-depth discussion of these technical details, see Ref.~\onlinecite{Steinwart2008:SVM}.} The RKHSs induced by the kernels $ k $ and $ l $ are denoted by $ \rkhs[X] $ and $ \rkhs[Y] $.

We will now introduce covariance operators and cross-covariance operators \cite{Baker70:XCov, Baker1973} on RKHSs. In what follows, we will always assume that  $\mathbb{E}_{\scriptscriptstyle \mathit{X}}[k(X,X)] <  \infty$
and $\mathbb{E}_{\scriptscriptstyle \mathit{Y}}[l(Y,Y)] <  \infty$,
which ensures that these operators are well-defined and Hilbert--Schmidt (for a comprehensive overview of kernel covariance operators and their applications, see Ref.~\onlinecite{MFSS16} and references therein). For any $ f \in \rkhs[X] $, let
\begin{equation*}
    \psi(Y) \otimes \phi(X) \colon f \mapsto \psi(Y) \innerprod{\phi(X)}{f}_{\rkhs[X]}
\end{equation*}
denote the \emph{tensor product operator} \cite{Reed} from $ \rkhs[X] $ to $ \rkhs[Y] $ defined by $ \phi(X) $ and $ \psi(Y) $.

\begin{definition}[Covariance operators]
The \emph{covariance operator} $ \cov[XX] \colon \rkhs[X] \to \rkhs[X] $ and the \emph{cross-covariance operator} $ \cov[YX] \colon \rkhs[X] \to \rkhs[Y] $ are defined as
\begin{alignat*}{4}
    \cov[XX] & := \int \phi(X) \otimes \phi(X) \ts \dd \pp{P}(X)
             &&= \mathbb{E}_{\scriptscriptstyle X}[\phi(X) \otimes \phi(X)], \\
    \cov[YX] & := \int \psi(Y) \otimes \phi(X) \ts \dd \pp{P}(Y,X)   
             &&= \mathbb{E}_{\scriptscriptstyle \mathit{YX}}[\psi(Y) \otimes \phi(X)].
\end{alignat*}
\end{definition}
Kernel covariance operators satisfy
\begin{equation*}
    \innerprod{g}{\cov[YX]f}_{\rkhs[Y]} = \mathrm{Cov}[g(Y), f(X)]
\end{equation*}
for all $ f \in \rkhs[X] $, $ g \in \rkhs[Y] $. Defining $ \phi_c(X) = \phi(X) - \mathbb{E}_{\scriptscriptstyle X}[\phi(X)] $ and $ \psi_c(Y) = \psi(Y) - \mathbb{E}_{\scriptscriptstyle Y}[\psi(Y)] $, the corresponding centered counterparts of the covariance and cross-covariance operators $\cov[XX]$ and $\cov[YX]$ are defined in terms of the mean-subtracted feature maps.

As these operators can in general not be determined analytically, empirical estimates are computed from data, i.e.,
\begin{align}
\begin{split} \label{eq:cov_estimates}
    \ecov[XX] &= \frac{1}{n} \sum_{i=1}^n \phi(x_i) \otimes \phi(x_i)
              = \frac{1}{n} \Phi \Phi^\top, \\
    \ecov[YX] &= \frac{1}{n} \sum_{i=1}^n \psi(y_i)\otimes\phi(x_i)
              = \frac{1}{n} \Psi \Phi^\top,
\end{split}
\end{align}
where $ \Phi = [\phi(x_1), \dots, \phi(x_n)] $ and $ \Psi = [\psi(y_1), \dots, \psi(y_n)] $ and the training data $ \{(x_i, y_i)\}_{i=1}^n $ is drawn i.i.d.\ from $ \pp{P}(X, Y) $. Analogously, the mean-subtracted feature maps can be used to obtain empirical estimates of the centered operators.
Since in practice we often cannot explicitly deal with these operators, in particular if the feature space is infinite-dimensional, we seek to reformulate algorithms in terms of Gram matrices.

\begin{definition}[Gram matrices]
Given training data as defined above, the Gram matrices $ \gram[XX], \gram[YY] \in \R^{n \times n} $ are defined as
\begin{align*}
    \gram[XX] &= \Phi^\top \Phi = \big[\ts k(x_i, x_j) \ts\big]_{i,j=1}^n, \\
    \gram[YY] &= \Psi^\top \Psi = \big[\ts l(y_i, y_j) \ts\big]_{i,j=1}^n.
\end{align*}
\end{definition}

For a Gram matrix $ G $, its centered version $ \widetilde{G} $ is defined by $ \widetilde{G} = N_0 \ts G \ts N_0 $, where $ N_0 = \id - \frac{1}{n} \mathds{1} \mathds{1}^\top $ and $ \mathds{1} \in \R^n $ is a vector composed of ones \cite{Bach03:KICA}. Note that centered Gram matrices are not regular.

\begin{remark}
In what follows, if not noted otherwise, we assume that the covariance operators $ \cov[XX] $ and $ \cov[YY] $ and the Gram matrices $ \gram[XX] $ and $ \gram[YY] $ are properly centered for CCA.
\end{remark}

\subsection{Kernel transfer operators}
\label{ssec:Kernel_transfer_operators}

We now show how transfer operators can be written in terms of covariance and cross-covariance operators---this leads to the concept of \emph{kernel transfer operators}. We assume the Perron--Frobenius
operator and the Koopman operator to be well-defined on $ L^2(\inspace) $ as discussed in Section~\ref{ssec:transfer_operators}. Kernel transfer operators follow from the assumption that densities and observables in $ L^2(\inspace) $ can be represented as elements of the RKHS $ \rkhs[X] $. Under some technical requirements, such as $ \int_\inspace k(x,x) \ts \dd x = \int_\inspace \norm{\phi(x)}_{\rkhs[X]}^2 \ts \dd x < \infty$, the elements of $ \rkhs[X] $ are included in $ L^2(\inspace) $  when they are identified with the respective equivalence class of square integrable functions. This correspondence can be derived from the theory of $ L^2(\inspace) $ integral operators\cite{Steinwart2008:SVM} and is often used in statistical learning theory~\cite{RBD10}. We may therefore assume that we can identify RKHS elements with the corresponding equivalence classes of functions in $L^2(\inspace)$. By requiring $\mathbb{E}_{\mu}[k(X,X)] <  \infty$ for a probability density $\mu(x)$, we obtain a similar statement for $L^2_\mu(\inspace)$.

We refer to Ref.~\onlinecite{KSM17} for the derivation of kernel transfer operators and a description of their relationships with kernel embeddings of conditional distributions. We will omit the technical details and directly define kernel transfer operators as the RKHS analogue of the standard transfer operators defined in Section~\ref{ssec:transfer_operators}. Using the same integral representations as before and defining the transfer operators on $\rkhs[X]$ instead of $L^2(\inspace)$, we obtain the \emph{kernel Perron--Frobenius operator} $\pf[k] \colon\rkhs[X] \rightarrow \rkhs[X] $ and the \emph{kernel Koopman operator} $ \ko[k] \colon \rkhs[X] \rightarrow \rkhs[X] $, respectively.

By defining the time-lagged process $Y_t = X_{t + \tau}$, we can write kernel transfer operators in terms of covariance and cross-covariance operators~\cite{KSM17}. Note that $X_t$ and $Y_t$ are defined on the same state space $ \inspace $; therefore, we have $\rkhs[X] = \rkhs[Y] $ and hence $ \cov[YX] : \rkhs[X] \rightarrow \rkhs[X] $ in this special case.
We obtain the important properties $ \cov[XX] \pf[k] \ts g = \cov[YX] g $
and $ \cov[XX] \ko[k] \ts g = \cov[XY] g $
for all $ g \in \rkhs[X] $, which allows us to write
\begin{equation} \label{eq:KTOs}
\begin{split}
    \pf[k] &= (\cov[XX] + \varepsilon \idop)^{-1} \ts \cov[YX], \\
    \ko[k] &= (\cov[XX] + \varepsilon \idop)^{-1} \ts \cov[XY].
\end{split}
\end{equation}
Here, $ (\cov[XX] + \varepsilon \idop)^{-1} $ is the Tikhonov-regularized
inverse of $ \cov[XX] $ with regularization parameter $\varepsilon > 0$.\!\footnote{See Refs.~\onlinecite{Gr93, EG96, EHN96} for a detailed discussion of ill-posed inverse problems and the regularization of bounded linear operators on Hilbert spaces.} Note the abuse of notation, since equality in the above inverse problems is only given asymptotically for $\varepsilon \to 0$ and pointwise for feasible $ \cov[YX] g \in \rkhs[X]$. Since $ \cov[XX] $ is a compact operator,
it does not admit a globally defined bounded inverse if the RKHS is infinite-dimensional. However, $ (\cov[XX] + \varepsilon \idop)^{-1} $ always  exists and is bounded. In fact, the operators $ \pf[k] $ and $ \ko[k] $ as given in the regularized form above are Hilbert--Schmidt.

The above notation and regularization of inverse covariance operators is standard in the context of kernel embeddings of conditional distributions and related Bayesian learning techniques. We refer to Refs.~\onlinecite{SHSF09, Song2013, Fukumizu13:KBR, Fukumizu15:NBI, MFSS16} for detailed discussions of properties of this ill-posed inverse problem in specific applications.

By replacing the analytical covariance operators with their empirical estimates in~\eqref{eq:KTOs}, we obtain empirical estimates for kernel transfer operators \cite{KSM17}. 
As done with empirical covariance operators in~\eqref{eq:cov_estimates}, it is possible to rewrite the empirical estimates of kernel transfer operators in terms of RKHS features in $ \Phi $ and $\Psi$ 
(see Refs.~\onlinecite{MFSS16, KSM17} for the derivation):
\begin{align}
\begin{split} \label{eq:KTO_estimates}
    \widehat{\pf[k]} &= (\ecov[XX] + \varepsilon \idop)^{-1} \ecov[YX] \\ &= \Psi \ts \big(\gram[XY]^{-1} \ts (\gram[XX] + n \varepsilon \id)^{-1} \ts \gram[XY]\big) \ts \Phi^\top, \\
    \widehat{\ko[k]} &= (\ecov[XX] + \varepsilon \idop)^{-1} \ecov[XY] \\ &= \Phi \ts (\gram[XX] + n \varepsilon \id)^{-1} \ts \Psi^\top.
\end{split}
\end{align}
In this case, $\Phi$ and $\Psi$ both contain observations in the same space $\rkhs[X]$, since $X_t$ and $Y_t$ are both defined on $\inspace$.

\subsection{Empirical RKHS operators}

In what follows, we will consider finite-rank RKHS operators given by a matrix which represents the action of the operator on fixed elements in the RKHSs. We will use this general setting to formulate results about the eigenvalues and eigenfunctions of empirical RKHS operators. Given a matrix $ B \in \R^{n \times n} $, we define  the bounded finite-rank operator $ \widehat{\mathcal{S}} \colon \rkhs[X] \to \rkhs[Y] $ by
\begin{equation} \label{eq:empirical_operator}
    \widehat{\mathcal{S}} = \Psi B \Phi^\top = \sum_{i,j=1}^n b_{ij} \ts \psi(y_i) \otimes \phi(x_j).
\end{equation}
We remark that although $ \Psi $ and $ \Phi $ may contain infinite-dimensional objects, we express inner products between RKHS elements in the classical matrix-vector multiplication form. That is, we interpret the embedded RKHS elements as (potentially infinite-dimensional) column vectors. This notation has become a de-facto standard in the machine learning community~\cite{MFSS16}. We can write empirical estimates of covariance operators in the form of~\eqref{eq:empirical_operator}. If the RKHS training features in $\Phi$ and $ \Psi $ are generated i.i.d.\ by the joint probability distribution $ \mathbb{P}(X,Y) $ of random variables $ X $ and $ Y $, then the cross-covariance operator $ \ecov[YX] $ takes the general form of an empirical RKHS operator with $ B = \frac{1}{n} \id $. We obtain $ \ecov[XX] $ as another special case with identical features $\Psi = \Phi $ drawn only from $ \mathbb{P}(X) $. Furthermore, the empirical estimates of the kernel Perron--Frobenius and kernel Koopman operator are special cases of $ \widehat{\mathcal{S}} $ as seen in~\eqref{eq:KTO_estimates} with $ B = \gram[XY]^{-1} \ts (\gram[XX] + n \varepsilon \id)^{-1} \ts \gram[XY] $ and $B = (\gram[XX] + n \varepsilon \id)^{-1} $, respectively. Note that the roles of $ \Phi $ and $ \Psi $ are interchanged for the empirical estimate of the Koopman operator, i.e., it is of the form $ \widehat{\mathcal{S}} = \Phi B \Psi^\top$.

We now show how spectral decomposition techniques can be applied to empirical RKHS operators in this general setting.\!\footnote{In general, all considered kernel transfer operators in this paper are compositions of compact and bounded operators and therefore compact. They admit series representations in terms of singular value decompositions as well as eigendecompositions in the self-adjoint case\cite{Reed}. The functional analytic details and the convergence of $ \widehat{\mathcal{S}} $ and its spectral properties in the infinite-data limit depend on the specific scenario and are beyond the scope of this paper.}  We can compute eigenvalues and corresponding eigenfunctions of $ \widehat{\mathcal{S}} $ by solving auxiliary matrix eigenvalue problems.
For the sake of self-containedness, we briefly reproduce the eigendecomposition result from Ref.~\onlinecite{KSM17}.

\begin{proposition} \label{prop:eigenfunctions}
Suppose $ \Phi $ and $ \Psi $ contain linearly independent elements. Let $ \widehat{\mathcal{S}} = \Psi B \Phi^\top $, then
\begin{enumerate}[wide, label=(\roman*), itemindent=\parindent, itemsep=0ex, topsep=0.5ex]
\item $ \widehat{\mathcal{S}} $ has an eigenvalue $ \lambda \ne 0 $ with corresponding eigenfunction $ \varphi = \Psi v $ if and only if $ v $ is an eigenvector of $ B \ts \gram[XY] $ associated with $ \lambda $, and, similarly,
\item $ \widehat{\mathcal{S}} $ has an eigenvalue $ \lambda \ne 0 $ with corresponding eigenfunction $ \varphi = \Phi \ts \gram[XX]^{-1} \ts v $ if and only if $ v $ is an eigenvector of $ \gram[XY] \ts B $.
\end{enumerate}
\end{proposition}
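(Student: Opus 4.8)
The plan is to prove both equivalences by substituting the proposed eigenfunction into the operator eigenvalue equation $\hat{\mathcal{S}}\varphi = \lambda\varphi$ and then collapsing the resulting RKHS identity to a finite-dimensional one. The two tools are the reproducing-kernel Gram relations $\Phi^\top\Phi = \gram[XX]$ and $\Phi^\top\Psi = \gram[XY]$, and the standing hypothesis that the columns of $\Phi$ and of $\Psi$ are linearly independent; the latter guarantees both that $\gram[XX]$ is invertible and that $\Psi w = 0$ (respectively $\Phi w = 0$) forces $w = 0$.

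For part (i) I would substitute $\varphi = \Psi v$ and use $\Phi^\top\Psi = \gram[XY]$ to obtain $\hat{\mathcal{S}}\Psi v = \Psi B \Phi^\top \Psi v = \Psi (B\gram[XY]) v$. Thus $\hat{\mathcal{S}}\varphi = \lambda\varphi$ becomes $\Psi\big((B\gram[XY])v - \lambda v\big) = 0$, and linear independence of the columns of $\Psi$ turns this into the matrix equation $B\gram[XY]\,v = \lambda v$; the reverse implication is the same computation read backwards. To see that this captures every nonzero eigenvalue, I would note that any $\varphi$ with $\hat{\mathcal{S}}\varphi = \lambda\varphi$ and $\lambda \neq 0$ satisfies $\varphi = \lambda^{-1}\hat{\mathcal{S}}\varphi$ and therefore lies in the range of $\hat{\mathcal{S}}$, which is $\mspan\{\psi(y_1),\dots,\psi(y_n)\}$, so it is automatically of the form $\Psi v$.

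For part (ii) I would proceed analogously, but the natural device is the intertwining identity $\Phi^\top \hat{\mathcal{S}} = \Phi^\top\Psi B\Phi^\top = (\gram[XY] B)\,\Phi^\top$, which follows directly from the definition of $\hat{\mathcal{S}}$. Substituting $\varphi = \Phi\gram[XX]^{-1}v$ and using $\Phi^\top\Phi = \gram[XX]$ gives $\hat{\mathcal{S}}\varphi = \Psi B v$; applying $\Phi^\top$ and invoking the intertwining identity then reduces the eigenvalue equation to $\gram[XY] B\,v = \lambda v$. Consistency of the two parts is underwritten by the elementary fact that $B\gram[XY]$ and $\gram[XY] B$ share the same nonzero eigenvalues, with eigenvectors related by $v \mapsto Bv$ and $v \mapsto \gram[XY] v$.

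The step I expect to be the main obstacle is the reverse direction of part (ii). In contrast to part (i), the candidate eigenfunction $\Phi\gram[XX]^{-1}v$ lies in $\mspan\{\phi(x_1),\dots,\phi(x_n)\}$ rather than in the range of $\hat{\mathcal{S}}$, so one cannot pass from the reduced matrix identity $\gram[XY] B\,v = \lambda v$ back to the operator identity merely by injectivity of $\Phi$; applying $\Phi^\top$ only controls the component of $\hat{\mathcal{S}}\varphi - \lambda\varphi$ inside $\mspan\{\phi(x_i)\}$. The careful part is therefore to certify the \emph{full} RKHS equation, and the clean way to do so is to route through part (i): given an eigenvector $v$ of $\gram[XY] B$, the vector $Bv$ is an eigenvector of $B\gram[XY]$, so part (i) already furnishes a genuine eigenfunction, and it remains to verify — using invertibility of $\gram[XX]$ and the linear-independence hypotheses — that the representation $\Phi\gram[XX]^{-1}v$ identifies the correct RKHS element. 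Making this identification precise, rather than only up to the orthogonal projection $\Phi\gram[XX]^{-1}\Phi^\top$ onto $\mspan\{\phi(x_i)\}$, is where the argument demands the most care.
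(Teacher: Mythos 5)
Your part~(i) is correct and is essentially the standard argument (note that the paper itself states Proposition~\ref{prop:eigenfunctions} without proof, reproducing it from Ref.~\onlinecite{KSM17}, with the detailed version in Ref.~\onlinecite{MSKS18}): substitute $\varphi = \Psi v$, use $\Phi^\top\Psi = \gram[XY]$, and collapse via linear independence of the columns of $\Psi$; your observation that any eigenfunction with $\lambda \neq 0$ satisfies $\varphi = \lambda^{-1}\hat{\mathcal{S}}\varphi \in \mspan\{\psi(y_1),\dots,\psi(y_n)\}$ is exactly the right way to see that (i) captures the full nonzero spectrum. The forward implication of (ii), via $\hat{\mathcal{S}}\Phi\gram[XX]^{-1}v = \Psi B v$ and the intertwining relation $\Phi^\top\hat{\mathcal{S}} = \gram[XY]B\,\Phi^\top$, is likewise correct, as is your remark that $B\gram[XY]$ and $\gram[XY]B$ share nonzero eigenvalues.

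However, the obstacle you flag in the reverse direction of (ii) is not merely a step demanding care: as literally stated, that direction is false without a further hypothesis, so your plan of ``routing through (i) and then identifying the RKHS element'' cannot be completed. From $\gram[XY]Bv = \lambda v$, part (i) furnishes the genuine eigenfunction $\lambda^{-1}\Psi B v \in \mspan\{\psi(y_i)\}$, while $\Phi\gram[XX]^{-1}v = \Phi\gram[XX]^{-1}\Phi^\top\bigl(\lambda^{-1}\Psi B v\bigr)$ is precisely its orthogonal projection onto $\mspan\{\phi(x_i)\}$; the two coincide if and only if the eigenfunction already lies in that span. A rank-one counterexample: take $n = 1$, a Gaussian kernel on $\inspace = \outspace$, $x_1 \neq y_1$, and $B = 1$. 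Then $\gram[XY]B = k(x_1, y_1) \neq 0$ has eigenvector $v = 1$, but the candidate $\varphi = k(x_1,x_1)^{-1}\phi(x_1)$ gives $\hat{\mathcal{S}}\varphi = \phi(y_1)$, which is not a multiple of $\phi(x_1)$; the true eigenfunction is $\phi(y_1)$. Hence (ii)'s ``if'' direction needs either the extra assumption $\mspan\{\psi(y_i)\} \subseteq \mspan\{\phi(x_i)\}$ --- automatic in the paper's kernel CCA application, where $\hat{\mathcal{S}} = \Phi B \Phi^\top$ has identical feature matrices on both sides and your route closes in one line, since $\hat{\mathcal{S}}\Phi\gram[XX]^{-1}v = \Phi B v$ equals $\lambda\Phi\gram[XX]^{-1}v$ if and only if $\gram[XX]Bv = \lambda v$ --- or the weaker reading that $\Phi\gram[XX]^{-1}v$ represents the eigenfunction only up to the projection $\Phi\gram[XX]^{-1}\Phi^\top$, equivalently agrees with it on evaluations at the data points. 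As submitted, your proof of (ii) is incomplete, and the honest conclusion is that its completion requires amending the hypothesis or the claim, not additional care in the identification.
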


For the Gaussian kernel, linear independence of elements in $ \Phi $ and $ \Psi $ reduces to requiring that the training data contains pairwise distinct elements in $ \inspace $ and $ \outspace $, respectively.
For dynamical systems applications, we typically assume that $ \Phi $ and $ \Psi $ contain information about the system at time $ t $ and at time $ t + \tau $, respectively. A more detailed version of Proposition~\ref{prop:eigenfunctions} and its extension to the singular value decomposition are described in Ref.~\onlinecite{MSKS18}. Further properties of $ \widehat{\mathcal{S}} $ and its decompositions will be studied in future work. Note that we generally assume that empirical estimates of RKHS operators converge in probability to their analytical counterparts in operator norm in the infinite data limit. These statistical properties and the resulting associated spectral convergence are examined in for example in Ref.~\onlinecite{RBD10}.

\subsection{Applications of RKHS operators}

Decompositions of RKHS operators have diverse applications, which we will only touch upon here. We will consider a specific problem---namely, kernel CCA---in Section~\ref{sec:Kernel CCA and coherent sets}.

\begin{enumerate}[wide, label=(\alph*), itemindent=\parindent, itemsep=0ex, topsep=0.5ex]
\item By sampling points from the uniform distribution, the \emph{Mercer feature map}\cite{Mercer, Schoe01, Steinwart2008:SVM} with respect to the Lebesgue measure on $ \inspace $ can be approximated by computing eigenfunctions of $ \ecov[XX] $---i.e., $ B = \frac{1}{n} \id $ and the auxiliary matrix eigenvalue problem is $ \frac{1}{n} \gram[XX] \ts v = \lambda \ts v $---as shown in Ref.~\onlinecite{MSKS18}. This can be easily extended to other measures.
\item Similarly, given an arbitrary data set $ \{x_i\}_{i=1}^n $, \emph{kernel PCA} computes the eigenvectors corresponding to the largest eigenvalues of the centered Gram matrix $ \gram[XX] $ and defines these eigenvectors as the data points projected onto the respective principal components. It is well-known that kernel PCA can also be defined in terms of the centered covariance operator $ \ecov[XX] $. A detailed connection of the spectrum of the Gram matrix and the covariance operator is given in Ref.~\onlinecite{STWCK02}.
Up to scaling, the eigenfunctions evaluated in the data points correspond to the principal components.

\item Given training data $ x_i \sim p_x $ and $ y_i = \Theta^\tau(x_i) $, where $ \Theta $ denotes the flow associated with the dynamical system and $ \tau $ the lag time---that is, if $ x_i $ is the state of the system at time $ t $, then $ y_i $ is the state of the system at time $ t + \tau $---, we define $ \Phi $ and $ \Psi $ as above. Eigenvalues and eigenfunctions of kernel transfer operators can be computed by solving a standard matrix eigenvalue problem (see Proposition~\ref{prop:eigenfunctions}).
Eigendecompositions of these operators result in metastable sets. For more details and real-world examples, see Refs.~\onlinecite{KSM17, KBSS18}. The main goal of this paper is the extension of the aforementioned methods to compute \emph{coherent sets} instead of \emph{metastable sets}.
\end{enumerate}

\section{Kernel CCA and coherent sets}
\label{sec:Kernel CCA and coherent sets}

This section contains the main results of our paper. We derive kernel CCA \cite{MRB01:CCA} for finite and infinite-dimensional feature spaces from the viewpoint of dynamical systems, and show that kernel CCA can be used to approximate coherent sets in dynamical data. Furthermore, we derive the new coherent mode decomposition method.

Given two multidimensional random variables $ X $ and $ Y\! $, standard CCA finds two sets of basis vectors such that the correlations between the projections of $ X $ and $ Y $ onto these basis vectors are maximized \cite{Borga01:CCA}. The new bases can be found by computing the dominant eigenvalues and corresponding eigenvectors of a matrix composed of covariance and cross-covariance matrices. Just like kernel PCA is a nonlinear extension of PCA, kernel CCA is a generalization of CCA. The goal of kernel CCA is to find two \emph{nonlinear} mappings $ f(X) $ and $ g(Y) $, where $ f \in \rkhs[X] $ and $ g \in \rkhs[Y] $, such that their correlation is maximized \cite{Fukumizu07:KCCA}. That is, instead of matrices, kernel CCA is now formulated in terms of covariance and cross-covariance operators. More precisely, the kernel CCA problem can be written as
\begin{equation*}
    \sup_{\substack{ f \in \rkhs[X] \\ g \in \rkhs[Y] }} \innerprod{g}{\cov[YX] f}_{\rkhs[Y]} \quad \text{s.t.} \quad
    \begin{cases}
    \innerprod{f}{\cov[XX]f}_{\rkhs[X]} = 1, \\
    \innerprod{g}{\cov[YY]g}_{\rkhs[Y]} = 1,
    \end{cases}
\end{equation*}
and the solution is given by the eigenfunctions corresponding to the largest eigenvalue of the problem
\begin{equation} \label{eq:KCCA-eig-full}
    \begin{cases}
        \cov[YX] f = \rho \ts \cov[YY] \ts g, \\
        \cov[XY] g = \rho \ts \cov[XX] \ts f.
    \end{cases}
\end{equation}
Further eigenfunctions corresponding to subsequent eigenvalues can be taken into account as in the standard setting described above. In practice, the eigenfunctions are estimated from finite samples. The empirical estimates of $ f $ and $ g $ are denoted by $ \widehat{f} $ and $ \widehat{g} $, respectively.

\begin{example}
In order to illustrate kernel CCA, let us analyze a synthetic data set similar to the one described in Ref.~\onlinecite{Fukumizu07:KCCA} using a Gaussian kernel with bandwidth $ \sigma = 0.3 $. Algorithms to solve the CCA problem will be described below. The results are shown in Figure~\ref{fig:KernelCCA}. Classical CCA would not be able to capture the nonlinear relationship between $ X $ and~$ Y $.  \exampleSymbol

\begin{figure*}
    \begin{minipage}{0.245\textwidth}
        \centering
        \subfiguretitle{\hspace{1.9em} (a)} \vspace*{0.3ex}
        \includegraphics[width=\textwidth]{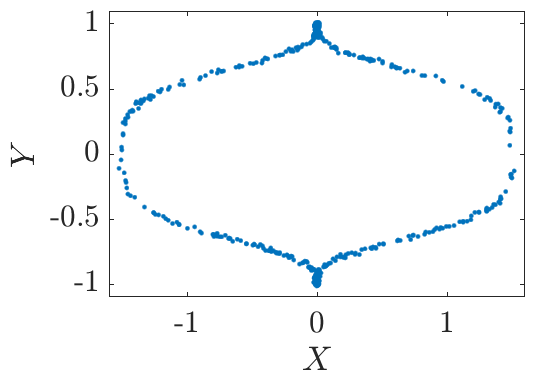}
    \end{minipage}
    \begin{minipage}{0.24\textwidth}
        \centering
        \subfiguretitle{\hspace{1.9em} (b)} \vspace*{-0.6ex}
        \includegraphics[width=\textwidth]{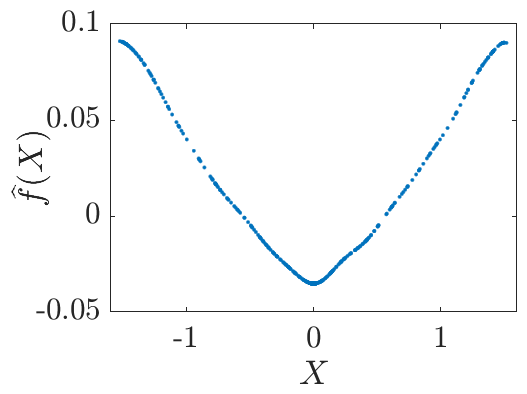}
    \end{minipage}
    \begin{minipage}{0.24\textwidth}
        \centering
        \subfiguretitle{\hspace{1.9em} (c)} \vspace*{-0.6ex}
        \includegraphics[width=\textwidth]{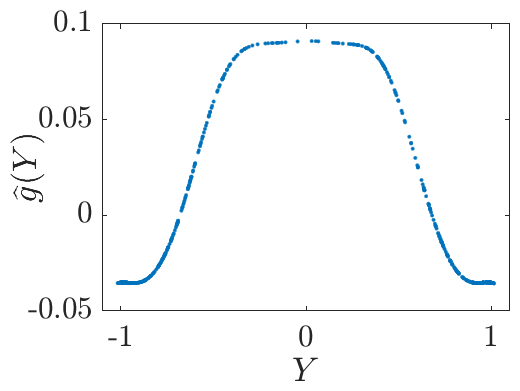}
    \end{minipage}
    \begin{minipage}{0.24\textwidth}
        \centering
        \subfiguretitle{\hspace{1.6em} (d)} \vspace*{-0.6ex}
        \includegraphics[width=\textwidth]{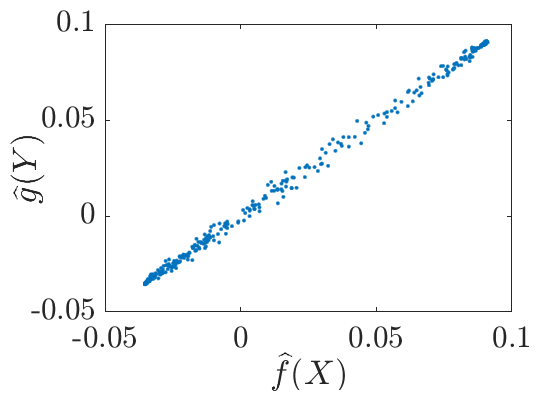}
    \end{minipage}
    \caption{Kernel CCA applied to noisy generalized superellipse data. The transformed variables $ \widehat{f}(X) $ and $ \widehat{g}(Y) $ are clearly highly correlated.}
    \label{fig:KernelCCA}
\end{figure*}

\end{example}

\subsection{RKHS operator formulation}
\label{ssec:RKHS operator formulation}

Since the inverses of the covariance operators in general do not exist, the regularized versions $(\cov[XX] + \varepsilon \idop)^{-1}$ and $(\cov[YY] + \varepsilon \idop)^{-1}$ (cf.~Section~\ref{ssec:Kernel_transfer_operators}) are also typically used in the context of CCA\cite{Fukumizu07:KCCA}. Solving the first equation in \eqref{eq:KCCA-eig-full} for $ g $ and inserting it into the second equation, this results in
\begin{equation} \label{eq:KCCA-eig-reduced}
    \quad \big(\cov[XX] + \varepsilon \idop\big)^{-1} \cov[XY] \big(\cov[YY] + \varepsilon \idop\big)^{-1} \cov[YX] f = \rho^2 f.
\end{equation}
Comparing this with the aforementioned transfer operator representations \eqref{eq:KTOs}, $ (\cov[XX] + \varepsilon \idop)^{-1} \cov[XY] $ can be interpreted as an approximation of the kernel Koopman operator, and $ (\cov[YY] + \varepsilon \idop)^{-1} \cov[YX] $ as a kernel Koopman operator where now the roles of $ X $ and $ Y $ are reversed or as a reweighted Perron--Frobenius operator. The composition of these operators corresponds to a push-forward and subsequent pull-back of a density $ f $. Eigenfunctions of the operator whose associated eigenvalues are close to one thus remain nearly unchanged under the forward-backward dynamics. This is closely related to the notion of \emph{coherence} as introduced in Refs.~\onlinecite{FrSaMo10:coherent, Froyland13:coherent} and will be discussed in Section~\ref{ssec:Relationships between kernel CCA and transfer operators}.

\begin{lemma}
Replacing the covariance and cross-covariance operators by their empirical estimates, the eigenvalue problem~\eqref{eq:KCCA-eig-reduced} can be written as
\begin{equation*}
    \Phi B \Phi^\top \widehat{f} = \rho^2 \widehat{f},
\end{equation*}
with $ B = (\gram[XX] + n \varepsilon \id)^{-1} (\gram[YY] + n \varepsilon \id)^{-1} \gram[YY] $.
\end{lemma}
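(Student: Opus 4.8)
The plan is to substitute the rank-$n$ empirical estimates from~\eqref{eq:cov_estimates} directly into the regularized operator on the left-hand side of~\eqref{eq:KCCA-eig-reduced} and simplify the resulting composition until every factor is sandwiched between $ \Phi $ on the left and $ \Phi^\top $ on the right, whereupon $ B $ can be read off from the middle. The entire argument rests on a single resolvent identity that lets us commute the (possibly infinite-dimensional) Tikhonov-regularized inverses of $ \ecov[XX] $ and $ \ecov[YY] $ past the feature matrices, trading operators on $ \rkhs[X] $ and $ \rkhs[Y] $ for Gram matrices on $ \R^n $ of the form already appearing in~\eqref{eq:KTO_estimates}.

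First I would establish this push-through identity. Since $ \ecov[XX] = \frac{1}{n}\Phi\Phi^\top $, we have $ \ecov[XX] + \varepsilon\idop = \frac{1}{n}(\Phi\Phi^\top + n\varepsilon\idop) $, and the purely algebraic relation $ \Phi(\gram[XX] + n\varepsilon\id) = (\Phi\Phi^\top + n\varepsilon\idop)\Phi $ holds because $ \gram[XX] = \Phi^\top\Phi $. As $ \varepsilon > 0 $, both $ \Phi\Phi^\top + n\varepsilon\idop $ on $ \rkhs[X] $ and $ \gram[XX] + n\varepsilon\id $ on $ \R^n $ are boundedly invertible, so multiplying on the appropriate sides yields
\[
    (\ecov[XX] + \varepsilon\idop)^{-1}\Phi = n\,\Phi\,(\gram[XX] + n\varepsilon\id)^{-1},
\]
and the analogous identity $ (\ecov[YY] + \varepsilon\idop)^{-1}\Psi = n\,\Psi\,(\gram[YY] + n\varepsilon\id)^{-1} $ for the $ Y $-space.

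With these in hand I would evaluate the composition from the right. Using $ \ecov[YX] = \frac{1}{n}\Psi\Phi^\top $ and the $ Y $-identity gives $ (\ecov[YY] + \varepsilon\idop)^{-1}\ecov[YX] = \Psi\,(\gram[YY] + n\varepsilon\id)^{-1}\Phi^\top $; left-multiplying by $ \ecov[XY] = \frac{1}{n}\Phi\Psi^\top $ and collapsing the inner product $ \Psi^\top\Psi = \gram[YY] $ produces $ \frac{1}{n}\Phi\,\gram[YY](\gram[YY] + n\varepsilon\id)^{-1}\Phi^\top $. Finally applying the $ X $-identity to the outer regularized inverse gives
\[
    \Phi\,(\gram[XX] + n\varepsilon\id)^{-1}\,\gram[YY]\,(\gram[YY] + n\varepsilon\id)^{-1}\,\Phi^\top,
\]
which is $ \Phi B \Phi^\top $ once one notes that $ \gram[YY] $ and $ (\gram[YY] + n\varepsilon\id)^{-1} $ commute, both being functions of the symmetric matrix $ \gram[YY] $, so that $ \gram[YY](\gram[YY] + n\varepsilon\id)^{-1} = (\gram[YY] + n\varepsilon\id)^{-1}\gram[YY] $ and $ B $ assumes the stated form. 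The eigenvalue problem $ \Phi B \Phi^\top\hat{f} = \rho^2\hat{f} $ then follows immediately.

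I expect the only genuine difficulty to be bookkeeping rather than analysis: the factors of $ \frac{1}{n} $ from each empirical estimate must cancel correctly against the $ n\varepsilon $ inside every regularized Gram inverse, and the roles of $ \Phi $ and $ \Psi $ must be tracked carefully because $ \ecov[XY] $ and $ \ecov[YX] $ are transposes of one another. The push-through identity itself demands no functional-analytic work beyond observing that $ \varepsilon > 0 $ guarantees the regularized inverses exist and are bounded, so the manipulation is legitimate even when $ \rkhs[X] $ is infinite-dimensional; in particular, linear independence of the columns of $ \Phi $ and $ \Psi $ is \emph{not} required here, only invertibility of the $ \varepsilon $-regularized operators.
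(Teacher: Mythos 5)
Your proof is correct and follows essentially the same route as the paper's: substitute the empirical estimates $\ecov[XX] = \frac{1}{n}\Phi\Phi^\top$, $\ecov[YX] = \frac{1}{n}\Psi\Phi^\top$, etc., into~\eqref{eq:KCCA-eig-reduced} and apply the push-through identity $\Psi^\top(\Psi\Psi^\top + n\varepsilon\idop)^{-1} = (\gram[YY] + n\varepsilon\id)^{-1}\Psi^\top$ (and its $\Phi$-analogue) to read off $B$; the paper simply cites this identity from the literature, whereas you derive it from the algebraic relation $\Phi(\gram[XX] + n\varepsilon\id) = (\Phi\Phi^\top + n\varepsilon\idop)\Phi$ and track the $\frac{1}{n}$-factors explicitly. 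The only cosmetic difference is that you apply the identity to $\Psi$ from the left rather than $\Psi^\top$ from the right, which is why you need the (trivial) commutation of $\gram[YY]$ with $(\gram[YY] + n\varepsilon\id)^{-1}$ at the end, while the paper obtains the stated form of $B$ directly.
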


\begin{proof}
Inserting the definitions of the empirical covariance and cross-covariance operators yields
\begin{equation*}
    \big(\Phi \Phi^\top + n \varepsilon \idop\big)^{-1} \Phi \Psi^\top \big(\Psi \Psi^\top + n \varepsilon \idop\big)^{-1} \Psi \Phi^\top \widehat{f} = \rho^2 \widehat{f}.
\end{equation*}
Using $ \Psi^\top \left(\Psi\Psi^\top + n \varepsilon \idop\right)^{-1} = \left( \Psi^\top \Psi + n \varepsilon \id\right)^{-1} \Psi^\top $, see Ref.~\onlinecite{MFSS16}, and a similar identity for $ \Phi $ concludes the proof.
\end{proof}

That is, the empirical RKHS operator for kernel CCA is of the form $ \widehat{S} = \Phi B \Phi^\top $. Applying Proposition~\ref{prop:eigenfunctions}, we must solve the auxiliary problem
\begin{enumerate}[label=(\roman*), itemsep=0ex, topsep=1.5ex]
\item $ (\gram[XX] + n \varepsilon \id)^{-1} (\gram[YY] + n \varepsilon \id)^{-1} \gram[YY] \ts \gram[XX] \ts v = \rho^2 \ts v $, with $ \widehat{f} = \Phi \ts v $, or
\item $ \gram[XX] (\gram[XX] + n \varepsilon \id)^{-1} (\gram[YY] + n \varepsilon \id)^{-1} \gram[YY] \ts v = \rho^2 \ts v $, with $ \widehat{f} = \Phi \ts (\gram[XX] + n \varepsilon \id)^{-1} \ts v $.
\end{enumerate}
Since $\gram[XX]$ and $(\gram[XX] + n \varepsilon \id)^{-1}$ as well as $\gram[YY]$ and $(\gram[YY] + n \varepsilon \id)^{-1}$ commute, the first problem can be equivalently rewritten as $ (\gram[XX] + n \varepsilon \id)^{-1} \gram[YY] (\gram[YY] + n \varepsilon \id)^{-1} \gram[XX] \ts v = \rho^2 \ts v $ and the second as $ (\gram[XX] + n \varepsilon \id)^{-1} \gram[XX] \gram[YY] (\gram[YY] + n \varepsilon \id)^{-1} \ts v = \rho^2 \ts v $. The eigenfunction associated with the largest eigenvalue solves the CCA problem, but in order to detect coherent sets, we will need more eigenfunctions later. To obtain the function $ g $ corresponding to $ \rho $, we compute
\begin{enumerate}[label=(\roman*), itemsep=0ex, topsep=1.5ex]
\item $ \widehat{g} = \frac{1}{\rho} \Psi (\gram[YY] + n \varepsilon \id)^{-1} \gram[XX] v $, or
\item $ \widehat{g} = \frac{1}{\rho} \Psi (\gram[YY] + n \varepsilon \id)^{-1} \gram[XX] (\gram[XX] + n \ts \varepsilon \id)^{-1} v $.
\end{enumerate}

\begin{mdframed}[backgroundcolor=boxback,hidealllines=true]
\begin{textalgorithm} \label{alg:CCA}
The CCA problem can be solved as follows:
\begin{enumerate}[leftmargin=3ex, itemindent=0ex, itemsep=0ex, topsep=0.5ex]
\item Choose a kernel $ k $ and regularization $ \varepsilon $.
\item Compute the centered gram matrices $ \gram[XX] $ and $ \gram[YY] $.
\item Solve $ \gram[XX] (\gram[XX] + n \varepsilon \id)^{-1} (\gram[YY] + n \varepsilon \id)^{-1} \gram[YY] \ts v = \rho^2 \ts v $.
\end{enumerate}
\vspace*{1ex}
\end{textalgorithm}
\end{mdframed}

The corresponding eigenfunction $ \widehat{f} $ evaluated at all data points $ x_1, \dots, x_n $, denoted by $ \widehat{f}_X $, is then approximately given by the vector $ v $. We can evaluate the eigenfunctions at any other point as described above, but we will mainly use the eigenfunction evaluations at the sampled data points for clustering into coherent sets.

Algorithm \ref{alg:CCA} is based on the second problem formulation, i.e., item (ii) above. However, the first variant can be used in the same way. Alternatively, we can rewrite it as an eigenvalue problem of the form
\begin{equation*}
    \begin{cases}
        (\gram[YY] + n \varepsilon \id)^{-1} \gram[XX] \ts v = \rho \ts w, \\
        (\gram[XX] + n \varepsilon \id)^{-1} \gram[YY] \ts w = \rho \ts v,
    \end{cases}
\end{equation*}
and, consequently,
\begin{equation} \label{eq:KCCA-gen-eig}
    \setlength\arraycolsep{0pt}
    \begin{bmatrix}
        0 & \gram[YY] \\
        \gram[XX] & 0
    \end{bmatrix}
    \hspace{-3pt}
    \begin{bmatrix}
        v \\ w
    \end{bmatrix}
    \!
    = \rho
    \!
    \begin{bmatrix}
        (\gram[XX] + n \varepsilon \id) & 0 \\
        0 & (\gram[YY] + n \varepsilon \id)
    \end{bmatrix}
    \hspace{-3pt}
    \begin{bmatrix}
        v \\ w
    \end{bmatrix}.
\end{equation}
Other formulations can be derived in a similar fashion. The advantage is that no matrices have to be inverted. However, the size of the eigenvalue problem doubles, which might be problematic if the number data points $n$ is large.

\begin{remark}
In order to apply the algorithms, we first need to choose a kernel and then tune its parameters, e.g., the bandwidth $ \sigma $ of the Gaussian kernel, and also the regularization parameter $ \varepsilon $. If the bandwidth is too small, this leads to overfitting and to oversmoothing if it is too large. Cross-validation techniques can be used to select suitable hyperparameters. The kernel itself determines the complexity of the function space in which the eigenfunctions are sought (see Section~\ref{ssec:Relationships between kernel CCA and transfer operators}). Additionally, the results depend on the lag time~$ \tau $. Sets that are coherent for a given lag time are not necessarily coherent for a different lag time since these sets might be dispersed again.
\end{remark}

The generalized eigenvalue problem \eqref{eq:KCCA-gen-eig} is almost identical to the one derived in Ref.~\onlinecite{Bach03:KICA}, with the difference that regularization is applied in a slightly different way. That is, the direct eigendecomposition of RKHS operators as proposed in Ref.~\onlinecite{KSM17} results, as expected, in variants of kernel CCA. The statistical convergence of kernel CCA, showing that finite sample estimators converge to the corresponding population counterparts, has been established in Ref.~\onlinecite{Fukumizu07:KCCA}. Kernel CCA can be extended to more than two variables or views of the data as described in Refs.~\onlinecite{Bach03:KICA, SC04:KernelMethods}, which might also have relevant applications in the dynamical systems context.

\subsection{Finite-dimensional feature space}
\label{ssec:Finite-dimensional feature space}

If the state spaces of the kernels $ k $ and $ l $ are finite-dimensional, we can directly solve the eigenvalue problem \eqref{eq:KCCA-eig-full} or \eqref{eq:KCCA-eig-reduced}. Assuming the feature space of the kernel $ k $ is $ r_x $-dimensional and spanned by the basis functions $ \{ \phi_1, \dots, \phi_{r_x} \} $, we define $ \phi \colon \inspace \to \R^{r_x} $ by $ \phi(x) = [\phi_1(x), \dots, \phi_{r_x}(x)]^\top $. That is, we are now using an explicit feature space representation. This induces a kernel by defining $ k(x, x^\prime) = \innerprod{\phi(x)}{\phi(x^\prime)} $.\!\footnote{For the Mercer feature space representation\cite{Mercer, Schoe01} the functions form an orthogonal basis, but orthogonality is not required here.}
We could, for instance, select a set of radial basis functions, monomials, or trigonometric functions. Analogously, we define a vector-valued function $ \psi \colon \outspace \to \R^{r_y} $, with $ \psi(y) = [\psi_1(y), \dots, \psi_{r_y}(y)]^\top $, where $ r_y $ is the dimension of the feature space of the kernel $ l $. Any function in the respective RKHS can be written as $ f = \alpha^\top \phi $ and $ g = \beta^\top \psi $, where $ \alpha \in \R^{r_x} $ and $ \beta \in \R^{r_y} $ are coefficient vectors.

Given training data $ \{(x_i, y_i)\}_{i=1}^n $ drawn from the joint probability distribution, we obtain $ \Phi \in \R^{r_x \times n} $ and $ \Psi \in \R^{r_y \times n} $ and can compute the centered covariance and cross-covariance matrices $ \ecov[XX] $, $ \ecov[XY] $, and $ \ecov[YY] $ explicitly.

\begin{mdframed}[backgroundcolor=boxback,hidealllines=true]
\begin{textalgorithm} \label{alg:CCA explicit}
Given explicit feature maps, we obtain the following CCA algorithm:
\begin{enumerate}[leftmargin=3ex, itemindent=0ex, itemsep=0ex, topsep=0.5ex]
\item Select basis functions $ \phi $ and $ \psi $ and regularization~$ \varepsilon $.
\item Compute (cross-)covariance matrices $ \ecov[XX] $, $ \ecov[XY] $, $ \ecov[YX] $, and $ \ecov[YX] $.
\item Solve the eigenvalue problem \\ $ \big(\ecov[XX] + \varepsilon \idop\big)^{-1} \ecov[XY] \big(\ecov[YY] + \varepsilon \idop\big)^{-1} \ecov[YX] v = \rho^2 \ts v $.
\end{enumerate}
\vspace*{1ex}
\end{textalgorithm}
\end{mdframed}

The eigenfunctions are then given by $ \widehat{f}(x) = \innerprod{v}{\phi(x)} $. Expressions for $ \widehat{g} $ can be derived analogously.

The difference between the Gram matrix approach described in Section~\ref{ssec:RKHS operator formulation} and the algorithm proposed here is that the size of the eigenvalue problem associated with the former depends on the number of data points and permits the dimension of the feature space to be infinite-dimensional, whereas the eigenvalue problem associated with the latter depends on the dimension of the feature space but not on the size of the training data set. This is equivalent to the distinction between \emph{extended dynamic mode decomposition} (EDMD) \cite{WKR15} and kernel EDMD \cite{WRK15} (or the variational approach \cite{NoNu13} and kernel TICA \cite{SP15}, where the system is typically assumed to be reversible; see Ref.~\onlinecite{KSM17} for a detailed comparison) with the small difference that often the Moore--Penrose pseudoinverse~\cite{Penrose} is used for EDMD in lieu of the Tikhonov-regularized inverse.

\subsection{Relationships with VAMP}
\label{ssec:Relationships}

Defining $ v = \big(\ecov[XX] + \varepsilon \idop\big)^{\!\sfrac{-1}{2}} \ts \widetilde{v} $, the eigenvalue problem in Algorithm~\ref{alg:CCA explicit} becomes
\begin{equation*}
    \big(\ecov[XX] + \varepsilon \idop\big)^{\!\sfrac{-1}{2}} \ts \ecov[XY] \ts \big(\ecov[YY] + \varepsilon \idop\big)^{-1} \ts \ecov[YX] \ts \big(\ecov[XX] + \varepsilon \idop\big)^{\!\sfrac{-1}{2}} \, \widetilde{v} = \rho^2 \ts \widetilde{v}.
\end{equation*}
The transformed eigenvectors $ \widetilde{v} $ are thus equivalent to the right singular vectors of the matrix
\begin{equation} \label{eq:VAMP_representation}
    \big(\ecov[YY] + \varepsilon \idop\big)^{\sfrac{-1}{2}} \ts \ecov[YX] \ts \big(\ecov[XX] + \varepsilon \idop\big)^{\!\sfrac{-1}{2}}
\end{equation}
and the values $ \rho $ are given by the singular values, which we assume to be sorted in nonincreasing order. 

This form is the kernel version of the TCCA method that has been first derived as a way to approximate transfer operator singular functions using VAMP, see Ref.~\onlinecite{WuNo17}.
VAMP is an optimization principle that defines a score function whose optimum leads to specific data-driven algorithms. The VAMP-$r$ score is defined as
\begin{equation*}
    S(X,Y) = \sum_i^k \rho^r(X,Y)
\end{equation*}
where $\rho^r(X,Y)$ are the singular value estimates obtained from an SVD of, e.g., \eqref{eq:VAMP_representation}, and $r$ is a positive integer.

When using $r=1$ and kernel feature functions, we obtain the kernel CCA algorithms
\ref{alg:CCA} or \ref{alg:CCA explicit}. When doing the same on an explicit
basis set of feature functions, we obtain TCCA, i.e., time-lagged CCA \cite{WuNo17, No18}. However, since with VAMP a score (or loss) function is available, we have now turned the identification of coherent sets into a generic machine learning problem. For example, training neural networks with VAMP results in VAMPnets, a deep learning method to low-rank approximattion of the transfer operators and the identification of metastable or coherent sets.

On the other hand, the fact that kernel CCA results from maximizing the VAMP score within a kernel approach shows that we can use the VAMP score in the context of cross-validation in order to optimally determine hyperparameters such as the kernel function. Furthermore we can explore other choices as $r=1$. For example, the choice $r=2$ has an interesting interpretation in terms of \textit{kinetic maps}, which are embeddings of the dominant eigenspace or singular space of a transfer operator where Euclidean distances are related to timescales of transitions~\cite{NoeClementi_JCTC15_KineticMap}.

\subsection{Relationships between kernel CCA and transfer operators}
\label{ssec:Relationships between kernel CCA and transfer operators}

We have seen in Section~\ref{ssec:RKHS operator formulation} that the resulting eigenvalue problem \eqref{eq:KCCA-eig-reduced} involves expressions resembling kernel transfer operators. The goal now is illustrate how this eigenvalue problem is related to the operators derived in Ref.~\onlinecite{BK17:coherent} for detecting coherent sets. We first introduce a \emph{forward operator} $ \mathcal{F} \colon L_\mu^2(\inspace) \to L^2(\outspace) $ by
\begin{equation*}
    (\mathcal{F} f)(y) = \int p_\tau(y \mid x) \ts f(x) \ts \mu(x) \ts \dd x,
\end{equation*}
where $ \mu $ is some reference density of interest. Furthermore, let $ \nu = \mathcal{F} \mathds{1} $ be the image density obtained by mapping the indicator function on $ \inspace $ forward in time. Normalizing $ \mathcal{F} $ with respect to $ \nu $, we obtain a new operator $ \mathcal{A} \colon L_\mu^2(\inspace) \to L_\nu^2(\outspace) $ and its adjoint $ \mathcal{A}^* \colon L_\nu^2(\outspace) \to L_\mu^2(\inspace) $, with
\begin{align*}
    (\mathcal{A} f)(y) &= \int \frac{p_\tau(y \mid x)}{\nu(y)} f(x) \ts \mu(x) \ts \dd x, \\
    (\mathcal{A}^* g)(x) &= \int p_\tau(y \mid x) \ts g(y) \ts \dd y.
\end{align*}
It holds that $ \innerprod{\mathcal{A} f}{g}_\nu = \innerprod{f}{\mathcal{A}^* g}_\mu $. Consequently, $ \mathcal{A} $ plays the role of a reweighted Perron--Frobenius operator, whereas $ \mathcal{A}^* $ can be interpreted as an analogue of the Koopman operator
(note that $\mathcal{A}$ and $\mathcal{A}^*$ are defined on reweighted $L^2$-spaces). A more detailed derivation can be found in Ref.~\onlinecite{BK17:coherent}, where the operator $ \mathcal{A}^* \mathcal{A} $ (or a trajectory-averaged version thereof) is used to detect coherent sets. We want to show that this is, up to regularization, equivalent to the operator in \eqref{eq:KCCA-eig-reduced}.

\begin{proposition} \label{prop:reweighted_to}
Assuming that $ \mathcal{A} f \in \rkhs[Y] $ for all $ f \in \rkhs[X] $, it holds that $ \cov[YY] \mathcal{A} f = \cov[YX]f $.
\end{proposition}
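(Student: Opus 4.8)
The plan is to reduce both sides of the claimed identity to the same double integral by expressing each covariance operator as an expectation of the feature map $\psi$ weighted by an evaluation of the relevant test function, and then identifying the marginal law of $Y$ with the image density $\nu$. First I would invoke the defining action of the covariance operators. For any $h\in\rkhs[Y]$, the reproducing property gives
\begin{equation*}
    \cov[YY]\,h = \mathbb{E}_{Y}\big[\psi(Y)\,\innerprod{\psi(Y)}{h}_{\rkhs[Y]}\big] = \mathbb{E}_{Y}\big[h(Y)\,\psi(Y)\big],
\end{equation*}
and the hypothesis $\mathcal{A}f\in\rkhs[Y]$ is exactly what allows me to set $h=\mathcal{A}f$, obtaining $\cov[YY]\mathcal{A}f = \mathbb{E}_{Y}[(\mathcal{A}f)(Y)\,\psi(Y)]$. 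In the same way, $\cov[YX]f = \mathbb{E}_{YX}[f(X)\,\psi(Y)]$.

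Second, I would write these expectations as integrals against the appropriate densities. Under the dynamical setup the reference density $\mu$ plays the role of the marginal of $X$, so the joint density factors as $p(x,y)=p_\tau(y\mid x)\,\mu(x)$ and the marginal of $Y$ coincides with $\nu(y)=\int p_\tau(y\mid x)\,\mu(x)\,\dd x=(\mathcal{F}\mathds{1})(y)$. Substituting the explicit form of $\mathcal{A}f$ into the left-hand integral,
\begin{equation*}
    \int \psi(y)\bigg(\int \frac{p_\tau(y\mid x)}{\nu(y)}\,f(x)\,\mu(x)\,\dd x\bigg)\nu(y)\,\dd y,
\end{equation*}
the normalizing factor $\nu(y)$ cancels against the $\nu(y)$ coming from the law of $Y$, and the result is $\int\!\int \psi(y)\,p_\tau(y\mid x)\,f(x)\,\mu(x)\,\dd x\,\dd y$. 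This is precisely the double integral representing $\mathbb{E}_{YX}[f(X)\,\psi(Y)]=\cov[YX]f$, which finishes the argument.

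The only delicate points are bookkeeping rather than analysis: one must justify that the reproducing property may be applied to $\mathcal{A}f$ (which is exactly the role of the standing assumption $\mathcal{A}f\in\rkhs[Y]$, since without it $\cov[YY]\mathcal{A}f$ need not even be defined) and that the marginal distribution of $Y$ implicit in $\cov[YY]$ is the push-forward $\nu$ of $\mu$ rather than some other reference. Once these identifications are in place, the cancellation of $\nu(y)$ is immediate and both sides collapse to the common double integral, so no genuine obstacle remains.
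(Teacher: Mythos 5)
Your proof is correct and takes essentially the same route as the paper: the paper verifies the identity weakly, testing $\innerprod{\cov[YY]\mathcal{A}f}{g}_{\rkhs[Y]} = \innerprod{\cov[YX]f}{g}_{\rkhs[Y]}$ for arbitrary $g \in \rkhs[Y]$, while you compute the Bochner-type expectations $\mathbb{E}_{Y}[(\mathcal{A}f)(Y)\,\psi(Y)]$ and $\mathbb{E}_{YX}[f(X)\,\psi(Y)]$ directly, but the substance---the cancellation of $\nu(y)$ against the law of $Y$ and the identification $p(x,y) = p_\tau(y \mid x)\,\mu(x)$---is identical. Your explicit flagging of the two implicit identifications (that $\mathcal{A}f \in \rkhs[Y]$ licenses applying the reproducing property, and that the marginal of $Y$ in $\cov[YY]$ is the push-forward $\nu$) is if anything slightly more careful than the paper, which uses both silently.
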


\begin{proof}
The proof is almost identical to the proof for the standard Perron--Frobenius operator (see Ref.~\onlinecite{KSM17}). For all $g \in \rkhs[Y]$, we obtain
\begin{align*}
    \innerprod{\cov[YY] \mathcal{A} f}{g}_{\rkhs[Y]}
    &= \mathbb{E}_{\scriptscriptstyle Y}[\mathcal{A} f(Y) \ts g(Y)] \\
    &= \iint \frac{p(y \mid x)}{\nu(y)} f(x) \ts \mu(x) \ts \dd x \ts g(y) \ts \nu(y) \dd y \\
    &= \iint p(y \mid x) \ts f(x) \ts g(y) \ts \mu(x) \ts \dd x \ts \dd y \\
    &= \iint p(x, y) \ts f(x) \ts g(y) \ts \dd x \ts \dd y \\
    &= \mathbb{E}_{\scriptscriptstyle XY}[f(X) \ts g(Y)] \\
    &= \innerprod{\cov[YX] f}{g}_{\rkhs[Y]}. \qedhere
\end{align*}
\end{proof}

We define the RKHS approximation of the operator $ \mathcal{A} $ by $ \mathcal{A}_k = (\cov[YY] + \varepsilon \idop)^{-1} \cov[YX] $. Note that the operator technically depends not only on $ k $ but also on $ l $, which we omit for brevity. In practice, we typically use the same kernel for $ \inspace $ and $ \outspace $. As a result, the eigenvalue problem~\eqref{eq:KCCA-eig-reduced} can now be written as
\begin{equation*}
    \ko[k] \mathcal{A}_k f = \rho^2 f.
\end{equation*}
The adjointness property for $ \varepsilon = 0 $, i.e., assuming that the inverse exists without regularization,\!\footnote{Conditions for the existence of the inverse can be found, for instance, in Ref.~\onlinecite{Song2013} and in Section~\ref{ssec:Finite-dimensional feature space}.} can be verified as follows:
\begin{equation*}
    \innerprod{\mathcal{A}_k f}{g}_\nu = \innerprod{\cov[YX] f}{g}_{\rkhs[Y]} = \innerprod{f}{\cov[XY] g}_{\rkhs[X]} = \innerprod{f}{\ko[k] g}_{\mu}.
\end{equation*}
We have thus shown that the eigenvalue problem for the computation of coherent sets and the CCA eigenvalue problem are equivalent, provided that the RKHS is an invariant subspace of $ \mathcal{T}_k $. Although this is in general not the case---depending on the kernel the RKHS might be low-dimensional (e.g., for a polynomial kernel), but could also be infinite-dimensional and isometrically isomorphic to $ L^2 $ (e.g., for the Gaussian kernel)---, we can use the kernel-based formulation as an approximation and solve it numerically to obtain coherent sets. This is the mathematical justification for the claim that CCA detects coherent sets, which will be corroborated by numerical results in Section~\ref{sec:Numerical results}.

\subsection{Coherent mode decomposition}

Borrowing ideas from \emph{dynamic mode decomposition} (DMD) \cite{Schmid10,TRLBK14}, we now introduce a method that approximates eigenfunctions or eigenmodes of the forward-backward dynamics using linear basis functions and refer to it as \emph{coherent mode decomposition} (CMD)---a mixture of CCA and DMD.\!\footnote{In fact, the method described below is closer to TICA than DMD, but other variants can be derived in the same fashion, using different combinations of covariance and cross-covariance operators.} The relationships between DMD and TICA (including their extensions) and transfer operators are delineated in Refs.~\onlinecite{KNKWKSN18, KSM17}. DMD is often used for finding coherent structures in fluid flows, dimensionality reduction, and also prediction and control; see Ref.~\onlinecite{KBBP16} for an exhaustive analysis and potential applications.

Let us assume we have high-dimensional time-series data but only relatively few snapshots. That is, $ \mathbf{X}, \mathbf{Y} \in \R^{d \times n} $ with $ d \gg n $, where $ \mathbf{X} = [x_1, \dots, x_n] $ and $ \mathbf{Y} = [y_1, \dots, y_n] $. This is, for instance, the case for fluid dynamics applications where the, e.g., two- or three-dimensional domain is discretized using (un)structured grids. It is important to note that this analysis is now not based on Lagrangian data as before, where we tracked the positions of particles or drifters over time, but on the Eulerian frame of reference.

Using Algorithm~\ref{alg:CCA explicit} with $ \phi(x) = x $ and $ \psi(y) = y $ is infeasible here since the resulting covariance and cross-covariance matrices would be prohibitively large; thus, we apply the kernel-based counterpart. The linear kernel $ k \colon \R^d \times \R^d \to \R $ is defined by $ k(x, x^\prime) = \phi(x)^\top \phi(x^\prime) = x^\top x^\prime $ and the Gram matrices are simply given by
\begin{equation*}
    \gram[XX] = \mathbf{X}^\top \! \mathbf{X} \quad \text{and} \quad \gram[YY] = \mathbf{Y}^\top \mathbf{Y},
\end{equation*}
where $ \gram[XX], \gram[YY] \in \R^{n \times n} $.

\begin{mdframed}[backgroundcolor=boxback,hidealllines=true]
\begin{textalgorithm} \label{alg:CMD}
Coherent mode decomposition.
\begin{enumerate}[leftmargin=3ex, itemindent=0ex, itemsep=0ex, topsep=0.5ex]
\item Choose regularization $ \varepsilon $.
\item Compute Gram matrices $ \gram[XX] $ and $ \gram[YY] $.
\item Solve the eigenvalue problem \\ $ (\gram[XX] + n \varepsilon \id)^{-1} (\gram[YY] + n \varepsilon \id)^{-1} \gram[YY] \ts \gram[XX] \ts v = \rho^2 \ts v $.
\end{enumerate}
\vspace*{1ex}
\end{textalgorithm}
\end{mdframed}

The eigenfunction $ \widehat{f} $ evaluated in an arbitrary point $ x \in \R^d $ is then given by
\begin{align*}
    \widehat{f}(x) &= \Phi(x) \ts v = [k(x_1, x), \, \dots, \, k(x_n, x)] \ts v = (\mathbf{X} v)^\top x \\
        &= \xi^\top \phi(x),
\end{align*}
where we define the \emph{coherent mode} $ \xi $ corresponding to the eigenvalue $ \rho $ by $ \xi = \mathbf{X} v $. That is, $ \xi $ contains the coefficients for the basis functions $ \phi $. Analogously, we obtain
\begin{align*}
    \widehat{g}(y) &= \tfrac{1}{\rho} \Psi(y) \ts (\gram[YY] + n \varepsilon \id)^{-1} \gram[XX] v = (\mathbf{Y} w)^\top y \\
        &= \eta^\top \psi(y),
\end{align*}
where $ w = \frac{1}{\rho} (\gram[YY] + n \varepsilon \id)^{-1} \gram[XX] v $ and $ \eta = \mathbf{Y} w $.

As mentioned above, DMD (as a special case of EDMD~\cite{WKR15}) typically uses the pseudoinverse to compute matrix representations of the corresponding operators. Nonetheless, a Tikhonov-regularized variant is described in Ref.~\onlinecite{EMBK17:DMD}.

\section{Numerical results}
\label{sec:Numerical results}

As we have shown above, many dimensionality reduction techniques or methods to analyze high-dimensional data can be regarded as eigendecompositions of certain empirical RKHS operators. We now seek to illustrate how kernel CCA results in coherent sets and potential applications of the coherent mode decomposition.

\subsection{Coherent sets}

We will first apply the method to a well-known benchmark problem, namely the Bickley jet, and then to ocean data and a molecular dynamics problem.

\subsubsection{Bickley jet}

Let us consider a perturbed Bickley jet, which is an approximation of an idealized stratospheric flow \cite{Rypina07:coherent} and a typical benchmark problem for detecting coherent sets (see, e.g., Refs.~\onlinecite{HKTH16:coherent, BK17:coherent, HSD18, FJ18:coherent}). The flow is illustrated in Figure~\ref{fig:Bickley}. For a detailed description of the model and its parameters, we refer to Ref.~\onlinecite{BK17:coherent}. Here, the state space is defined to be periodic in the $x_1$-direction with period $ 20 $. In order to demonstrate the notion of \emph{coherence}, we arbitrarily color one circular set yellow and one red and observe their evolution. The yellow set is dispersed quickly by the flow; the red set, on the other hand, moves around but barely changes shape. The red set is hence called \emph{coherent}.

\begin{figure*}
    \centering
    \begin{minipage}{0.4\textwidth}
        \centering
        \subfiguretitle{(a)} \vspace*{-0.6ex}
        \includegraphics[width=\textwidth]{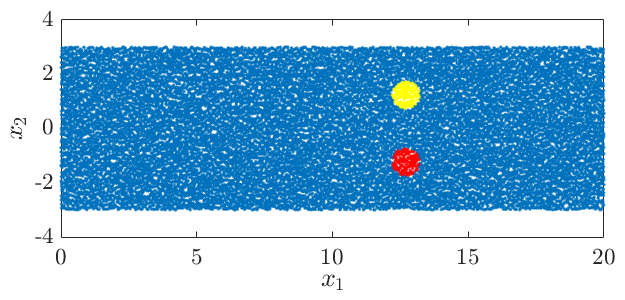}
    \end{minipage}
    \begin{minipage}{0.4\textwidth}
        \centering
        \subfiguretitle{(b)} \vspace*{-0.6ex}
        \includegraphics[width=\textwidth]{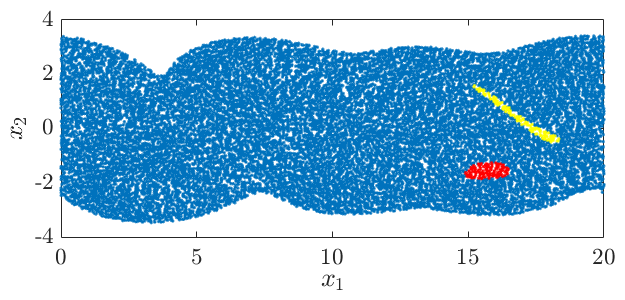}
    \end{minipage} \\
    \begin{minipage}{0.4\textwidth}
        \centering
        \subfiguretitle{(c)} \vspace*{-0.6ex}
        \includegraphics[width=\textwidth]{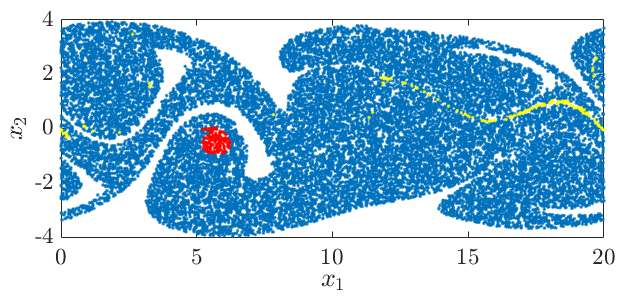}
    \end{minipage}
    \begin{minipage}{0.4\textwidth}
        \centering
        \subfiguretitle{(d)} \vspace*{-0.6ex}
        \includegraphics[width=\textwidth]{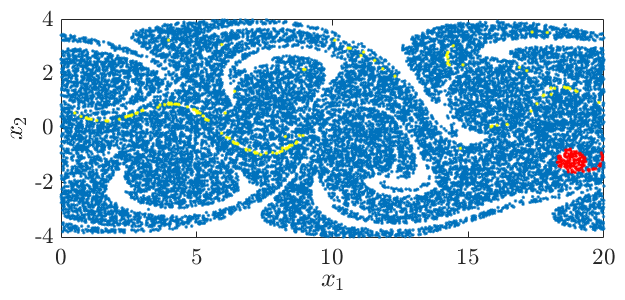}
    \end{minipage}
    \caption{Bickley jet at times (a) $t = 0$, (b) $t = 10$, (c) $t = 50$, and (d) $t = 100$ illustrating the difference between a non-coherent (yellow) and a coherent set (red). While the yellow set is dispersed after a short time, the shape of the red set remains nearly unchanged for a long time.}
    \label{fig:Bickley}
\end{figure*}

We generate 10000 uniformly distributed test points $ x_i $ in $ \inspace = [0, 20] \times [-3, 3] $ and then simulate their progression in time. For the computation of the coherent sets, we use only the start and end points of each trajectory, i.e., we define $ y_i = \Theta^\tau(x_i) $, where $ \Theta^\tau $ denotes the flow associated with the dynamical system. We set $ \tau = 40 $. From the vectors $ x_i $ and $ y_i $, we then compute the Gram matrices $ \gram[XX] $ and $ \gram[YY] $ using the same Gaussian kernel. Here, we define the bandwidth to be $ \sigma = 1 $ and the regularization parameter to be $ \varepsilon = 10^{-7} $.

A few dominant eigenfunctions are shown in Figure~\ref{fig:BickleyCS}~(a)--(d). The first eigenfunction distinguishes between the top and bottom ``half'' and the second one between the middle part and the rest. The subsequent eigenfunctions pick up combinations of the vortices. Applying $ k $-means with $ k = 9 $ to the first $ 9 $ eigenfunctions results in the coherent sets shown in Figure~\ref{fig:BickleyCS}~(e). This is consistent with the results presented in Ref.~\onlinecite{BK17:coherent} as shown in Figure~\ref{fig:BickleyCS}~(f), where we apply space-time diffusion maps to the trajectory data (comprising 40 snapshots). While the results are qualitatively the same although kernel CCA uses only two snapshots, the coherent sets computed by our approach are less noisy, which might be due to the smoothing effects of the Gaussian kernel.

Choosing a finite-dimensional feature space explicitly, as described in Section~\ref{ssec:Finite-dimensional feature space}, by selecting a set of radial basis functions whose centers are given by a regular grid leads to comparable results. Currently, only start and end points of trajectories are considered. As a result, points that drift apart and then reunite at time $ \tau $ would constitute coherent sets. Applying kernel CCA to less well-behaved systems might require more sophisticated kernels that take entire trajectories into account, e.g., by employing averaging techniques as suggested in~Ref.~\onlinecite{BK17:coherent}.

\begin{figure*}
    \centering
    \begin{minipage}{0.4\textwidth}
        \centering
        \subfiguretitle{(a) $ \rho \approx 0.98 $}
        \includegraphics[width=\textwidth]{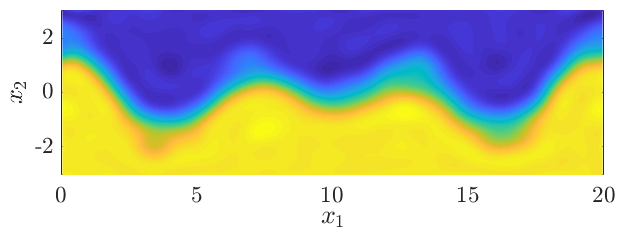} \\
        \subfiguretitle{(c) $ \rho \approx 0.78 $}
        \includegraphics[width=\textwidth]{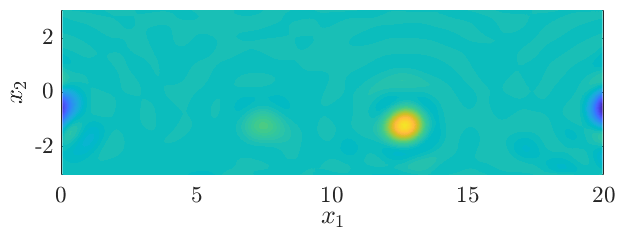}
    \end{minipage}
    \begin{minipage}{0.4\textwidth}
        \centering
        \subfiguretitle{(b) $ \rho \approx 0.87 $}
        \includegraphics[width=\textwidth]{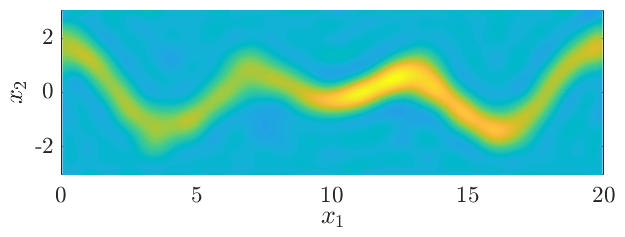} \\
        \subfiguretitle{(d) $ \rho \approx 0.75 $}
        \includegraphics[width=\textwidth]{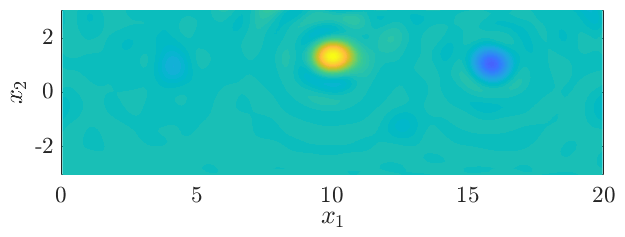}
    \end{minipage}
    \begin{minipage}{0.054\textwidth}
        \vspace*{-2.6ex}
        \includegraphics[width=\textwidth]{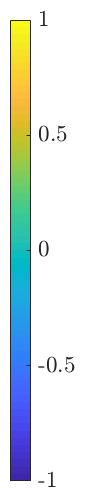}
    \end{minipage} \\
    \begin{minipage}{0.4\textwidth}
        \centering
        \subfiguretitle{(e)}
        \includegraphics[width=\textwidth]{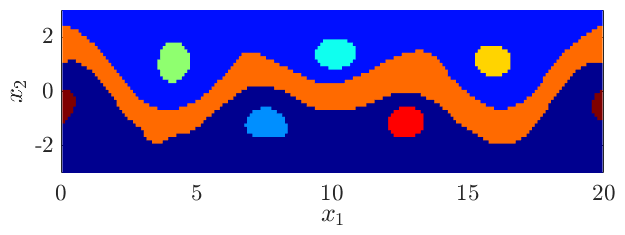}
    \end{minipage}
    \begin{minipage}{0.4\textwidth}
        \centering
        \subfiguretitle{(f)}
        \includegraphics[width=\textwidth]{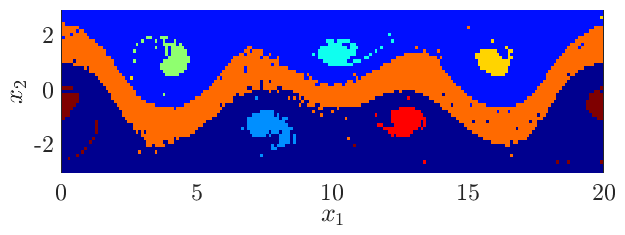}
    \end{minipage}
    \hspace*{0.054\textwidth}
    \caption{(a) First, (b) second, (c) fourth, and (d) sixth eigenfunction associated with the Bickley jet for $ \tau = 40 $. (e) $ k $-means clustering of the nine dominant eigenfunctions into nine coherent sets. The red coherent set around $ x = [12.5, -1.25]^\top $ corresponds to (but is not identical to) the red set in Figure~\ref{fig:Bickley}, where we arbitrarily selected a perfectly circular shape. (f)~Clustering obtained by applying space-time diffusion maps \cite{BK17:coherent}.}
    \label{fig:BickleyCS}
\end{figure*}

\subsubsection{Ocean data}

Ocean currents are driven by winds and tides, as well as differences in salinity. There are five major gyres as illustrated in Figure~\ref{fig:OceanData}~(a), which has been reproduced with permission of the \emph{National Ocean Service} (NOAA).\!\footnote{NOAA. What is a gyre? \url{https://oceanservice.noaa.gov/facts/gyre.html}} Our goal now is to detect these gyres from virtual buoy trajectories. In order to generate Lagrangian data, we use the \emph{OceanParcels} toolbox\footnote{OceanParcels project: \url{http://oceanparcels.org/}} (see Ref.~\onlinecite{OceanParcels17} for details) and data from the \emph{GlobCurrent} repository,\!\footnote{GlobCurrent data repository: \url{http://www.globcurrent.org/}} provided by the \emph{European Space Agency}. More precisely, our drifter computations are based on the Eulerian total current at significant wave height from the sum of geostrophic and Ekman current components, starting on the 1st of January 2016 and ending on the 31st of December 2016 with 3-hourly updates.

We place 15000 uniformly distributed virtual drifters in the oceans and let the flow evolve for one year, which thus constitutes the lag time $ \tau $. Let $ x_i $ denote the initial positions and $ y_i $ the new positions of the drifters after one year. The domain is $ \inspace = [-180^\circ, 180^\circ] \times [-80^\circ, 80^\circ] $, where the first dimension corresponds to the longitudes and the second to the latitudes. For the coherent set analysis, we select a Gaussian kernel $ k(x, x^\prime) = \exp\left(-\frac{d(x, x^\prime)^2}{2\sigma^2}\right)$ with bandwidth $ \sigma = 30 $, where $ d(x, x^\prime) $ is the distance between the points $ x $ and $ x^\prime $ in kilometers computed with the aid of the haversine formula. The regularization parameter $ \varepsilon $ is set to $ 10^{-4} $. The first two dominant eigenfunctions computed using kernel CCA are shown in Figure~\ref{fig:OceanData}~(b) and~(c) and a $k$-means clustering of the six dominant eigenfunctions in Figure~\ref{fig:OceanData}~(d). CCA correctly detects the main gyres---the splitting of the South Atlantic Gyre and the Indian Ocean Gyre might be encoded in eigenfunctions associated with smaller eigenvalues---and the Antartic Circumpolar Current. The clusters, however, depend strongly on the lag time $ \tau $. In order to illustrate the flow properties, typical trajectories are shown in Figure~\ref{fig:OceanData}~(e). The trajectories belonging to different coherent sets remain mostly separated, although weak mixing can be seen, for instance, at the borders between the red and purple and red and green clusters.

\begin{figure*}
    \centering
    \begin{minipage}{0.4\textwidth}
        \centering
        \subfiguretitle{(a)}
        \includegraphics[width=\textwidth]{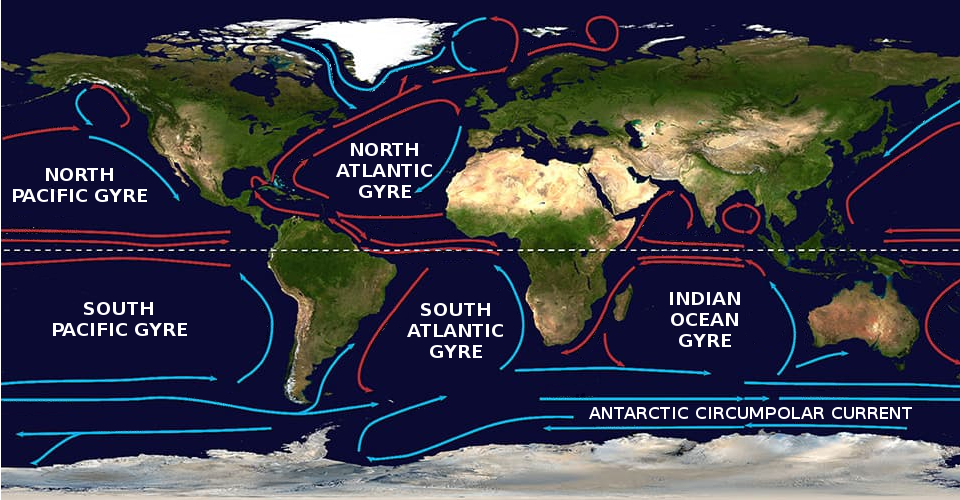}
    \end{minipage} \\[2ex]
    \begin{minipage}{0.4\textwidth}
        \centering
        \subfiguretitle{(b) $ \rho \approx 0.99 $}
        \includegraphics[width=0.95\textwidth]{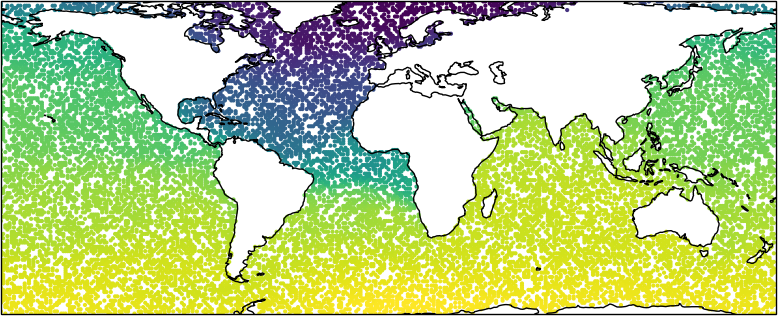}
    \end{minipage}
    \begin{minipage}{0.4\textwidth}
        \centering
        \subfiguretitle{(c) $ \rho \approx 0.98 $}
        \includegraphics[width=0.95\textwidth]{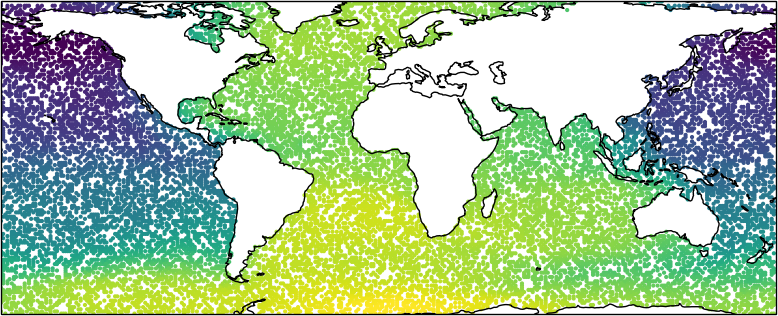}
    \end{minipage} \\[1ex]
    \begin{minipage}{0.4\textwidth}
        \centering
        \subfiguretitle{(d)}
        \includegraphics[width=0.95\textwidth]{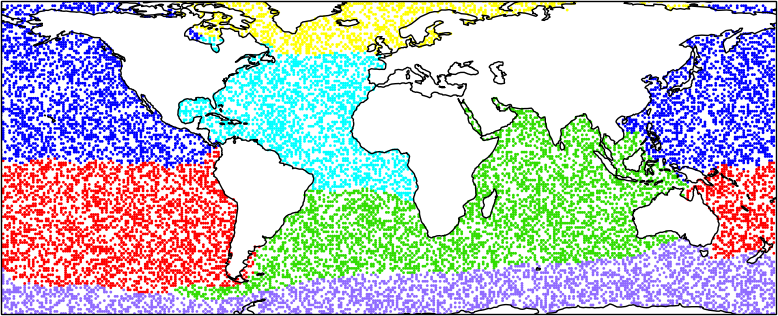}
    \end{minipage}
    \begin{minipage}{0.4\textwidth}
        \centering
        \subfiguretitle{(e)}
        \includegraphics[width=0.95\textwidth]{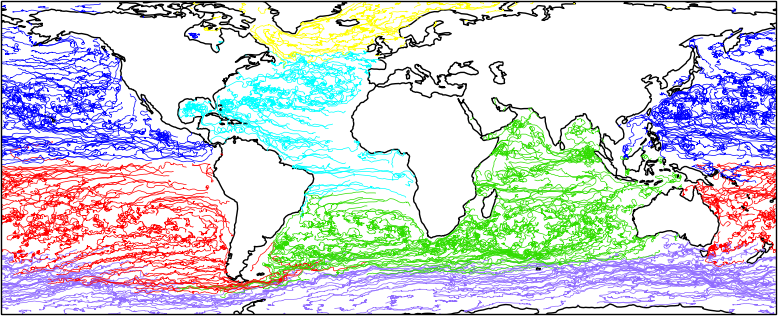}
    \end{minipage}
    \caption{(a) Illustration of the major ocean gyres (courtesy of NOAA). (b)~First and (c)~second eigenfunction. (d)~$ k $-means clustering of the first six eigenfunctions into six coherent sets. (e)~Subset of the trajectories colored according to the coherent sets.}
    \label{fig:OceanData}
\end{figure*}

\subsubsection{Time-dependent energy potential}

As a last example, we will analyze a molecular-dynamics inspired problem, namely diffusion in a time-dependent two-dimensional energy landscape, given by the stochastic differential equation
\begin{equation*}
    \dd X_t = -\nabla V(X_t, t) \ts \dd t + \sqrt{2 \ts \beta^{-1}} \ts \dd W_t,
\end{equation*}
with
\begin{align*}
    V(x, t) &= \cos\left(s \ts \arctan(x_2, x_1) - \tfrac{\pi}{2} t\right) \\
     &+ 10 \left( \sqrt{x_1^2 + x_2^2} - \tfrac{3}{2} - \tfrac{1}{2} \sin(2 \pi t) \right)^2.
\end{align*}
The parameter $ \beta $ is the dimensionless inverse (absolute) temperature, $ W_t $ a standard Wiener process, and $ s $ specifies the number of wells. This is a generalization of a potential defined in Ref.~\onlinecite{BKKBDS18}, whose wells now move periodically towards and away from the center and which furthermore slowly rotates. We set $ s = 5 $. The resulting potential for $ t = 0 $ is shown in Figure~\ref{fig:LemonSlice}~(a). Particles will typically quickly equilibrate in radial direction towards the closest well and stay in this well, which moves over time. Particles trapped in one well will remain coherent for a relatively long time. The probability of escaping and moving to another one depends on the inverse temperature: The higher $ \beta $, the less likely are transitions between~wells.

\begin{figure*}
    \centering
    \begin{minipage}{0.4\textwidth}
        \centering
        \subfiguretitle{(a)} \vspace*{-0.6ex}
        \includegraphics[width=0.9\textwidth]{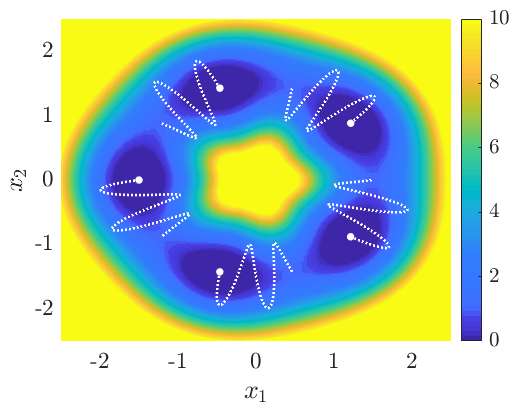}
    \end{minipage}
    \begin{minipage}{0.4\textwidth}
        \centering
        \subfiguretitle{(b)} \vspace*{-0.6ex}
        \includegraphics[width=0.9\textwidth]{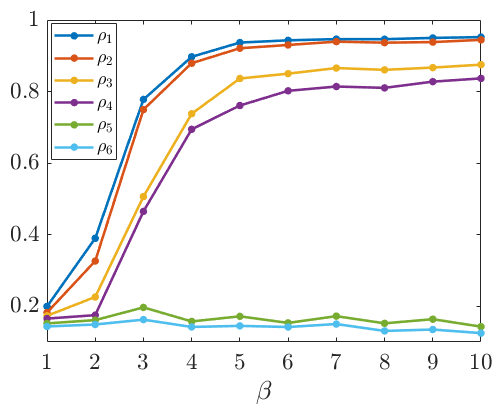}
    \end{minipage}
    \begin{minipage}{0.4\textwidth}
        \centering
        \subfiguretitle{(c)}
        \includegraphics[width=0.9\textwidth]{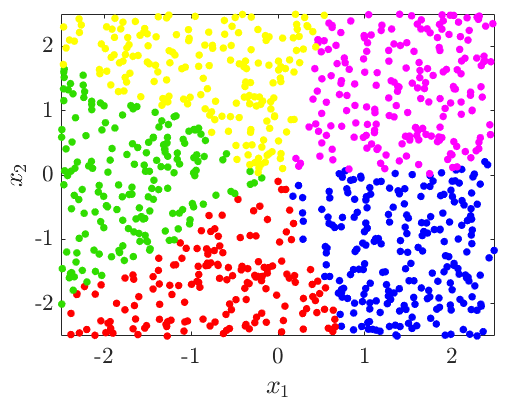}
    \end{minipage}
    \begin{minipage}{0.4\textwidth}
        \centering
        \subfiguretitle{(d)}
        \includegraphics[width=0.9\textwidth]{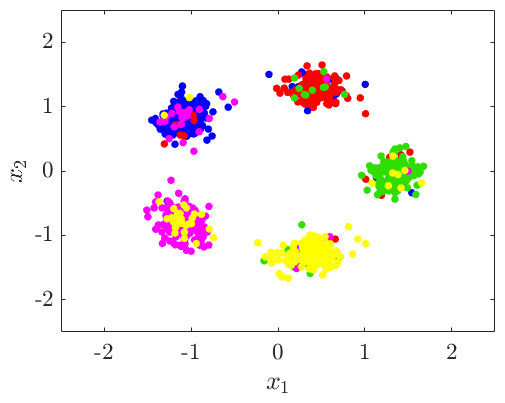}
    \end{minipage}
    \caption{(a)~Time-dependent 5-well potential for $ t = 0 $. The dotted white lines indicate the periodic movement of the centers of the wells over time. (b)~Dominant eigenvalues (averaged over multiple runs) as a functions of $ \beta $. Coherence increases with increasing inverse temperature, i.e., the eigenvalues are closer to $ 1 $ for decreasing temperature. (c)~Coherent set clustering for $ \beta = 3 $ at initial time $ t = 0 $. (d)~Corresponding clustering at $ t = 10 $. The clusters moved but are still mostly coherent save for moderate mixing.}
    \label{fig:LemonSlice}
\end{figure*}

We generate 1000 uniformly distributed test points in $ \inspace = [-2.5, 2.5] \times [-2.5, 2.5] $ and integrate the system with the aid of the Euler--Maruyama method and the step size $ h = 10^{-3} $ from $ t = 0 $ to $ t = 10 $. As before, we use only the start and end points of the trajectories and a Gaussian kernel (here, $\sigma = 1$ and $ \varepsilon = 10^{-6}$) for the coherent set analysis.

Due to the centering of the Gram matrices, the eigenvalue $ \lambda = 1 $ vanishes and---depending on the parameter $ \beta $---four eigenvalues close to one remain as illustrated in Figure~\ref{fig:LemonSlice}~(b). Figure~\ref{fig:LemonSlice}~(c) shows a clustering of the dominant four eigenfunctions for $ \beta = 3 $ based on PCCA\texttt{+} \cite{Roeblitz2013}, resulting in the expected five coherent sets. The clustering at $ t = 10 $ (see Figure~\ref{fig:LemonSlice}~(d)) illustrates that the computed sets indeed remain largely coherent.

Standard methods for the computation of metastable sets such as Ulam's method, EDMD, or their variants are in general not suitable for non-equilibrium dynamics; see also Ref.~\onlinecite{KWNS18:noneq} and Section~\ref{ssec:Relationships}.

\subsection{Coherent mode decomposition}

In order to illustrate the coherent mode decomposition outlined in Algorithm~\ref{alg:CMD}, we consider the classical von K\'arm\'an vortex street and generate data using a simple Python implementation.\!\footnote{Palabos project: \url{http://wiki.palabos.org/numerics:codes}} It is important to note that here we take into account the full trajectory data $ \{z_0, \dots, z_n\} $, where $ z_i $ is the state at time $ t = 20 \ts i $, and define $ X = [z_0, \dots, z_{n-1}] $ and $ Y = [z_1, \dots, z_n] $, whereas we generated uniformly distributed data for the coherent set analysis in the previous subsection and furthermore used only the start and end points of the trajectories. We set $ n = 100 $ and $ \varepsilon = 0.1 $. Some snapshots of the system are shown in Figure~\ref{fig:Karman}~(a)--(d). Applying CMD results in the modes depicted in Figure~\ref{fig:Karman}~(e)--(h), where the color bar is the same as in Figure~\ref{fig:BickleyCS}. As described above, we obtain two modes, denoted by $ \xi $ and $ \eta $, for each eigenvalue $ \rho $, where $ \eta $ can be interpreted as the time-lagged counterpart of $ \xi $.

\begin{figure*}
    \centering
    \begin{minipage}{1em}
        \centering
        \vspace*{2ex}
        $ \phantom{\xi} $
    \end{minipage}
    \begin{minipage}{0.24\textwidth}
        \centering
        \subfiguretitle{(a) $ t = 100 $}
        \includegraphics[width=\textwidth]{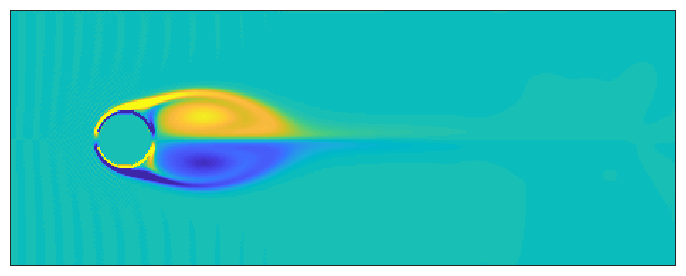}
    \end{minipage}
    \begin{minipage}{0.24\textwidth}
        \centering
        \subfiguretitle{(b) $ t = 200 $}
        \includegraphics[width=\textwidth]{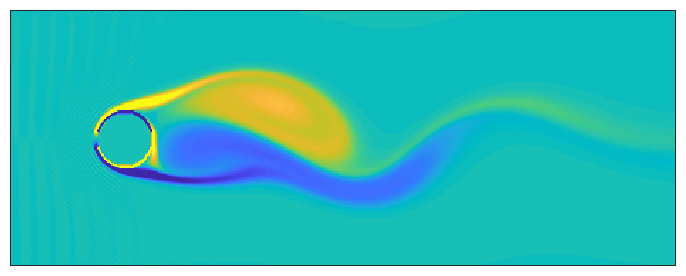}
    \end{minipage}
    \begin{minipage}{0.24\textwidth}
        \centering
        \subfiguretitle{(c) $ t = 300 $}
        \includegraphics[width=\textwidth]{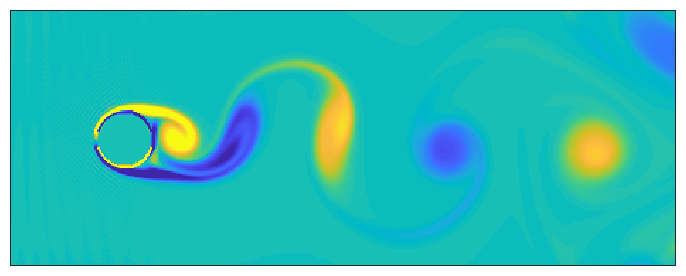}
    \end{minipage}
    \begin{minipage}{0.24\textwidth}
        \centering
        \subfiguretitle{(d) $ t = 400 $}
        \includegraphics[width=\textwidth]{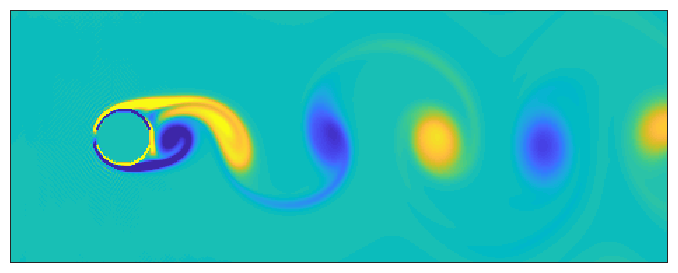}
    \end{minipage} \\[2ex]
    \begin{minipage}{1em}
        \centering
        \vspace*{2ex}
        $ \xi $ \\[13ex]
        $ \eta $
    \end{minipage}
    \begin{minipage}{0.24\textwidth}
        \centering
        \subfiguretitle{(e) $ \rho \approx 0.99  $}
        \includegraphics[width=\textwidth]{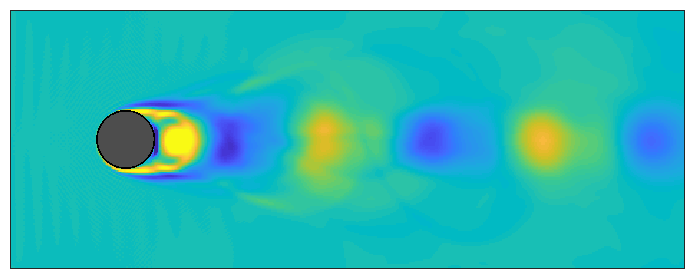} \\
        \includegraphics[width=\textwidth]{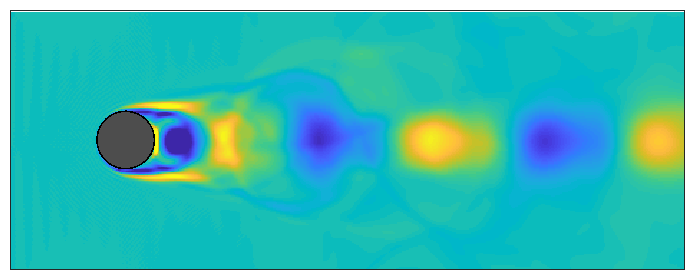}
    \end{minipage}
    \begin{minipage}{0.24\textwidth}
        \centering
        \subfiguretitle{(f) $ \rho \approx 0.98 $}
        \includegraphics[width=\textwidth]{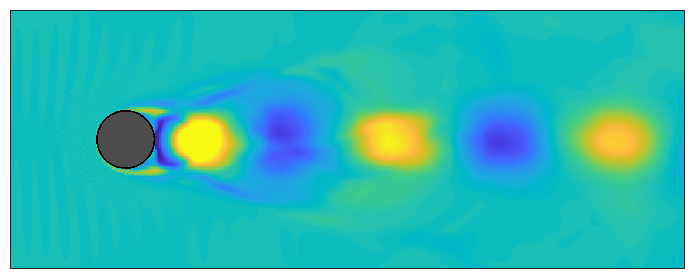} \\
        \includegraphics[width=\textwidth]{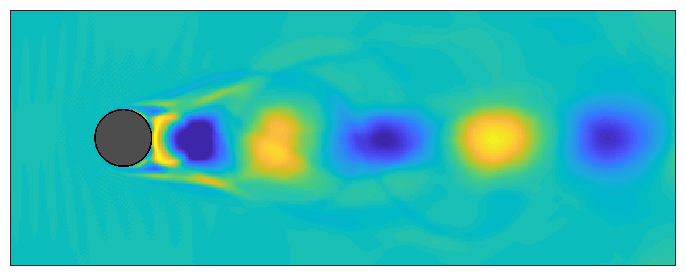}
    \end{minipage}
    \begin{minipage}{0.24\textwidth}
        \centering
        \subfiguretitle{(g) $ \rho \approx 0.94 $}
        \includegraphics[width=\textwidth]{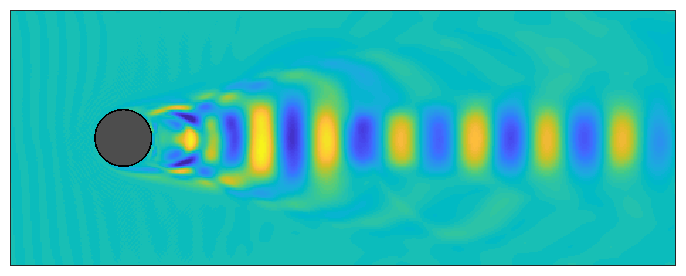} \\
        \includegraphics[width=\textwidth]{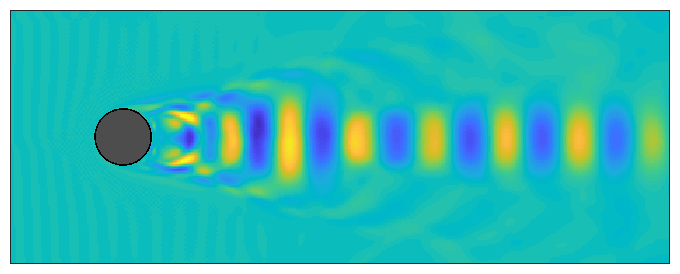}
    \end{minipage}
    \begin{minipage}{0.24\textwidth}
        \centering
        \subfiguretitle{(h) $ \rho \approx 0.92 $}
        \includegraphics[width=\textwidth]{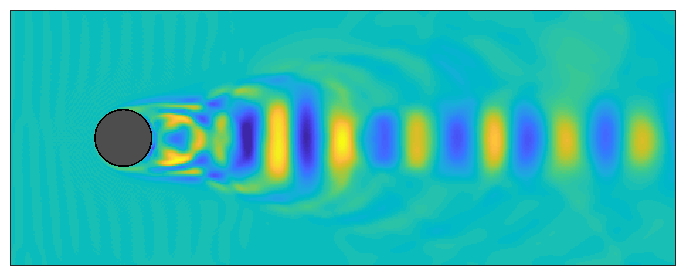} \\
        \includegraphics[width=\textwidth]{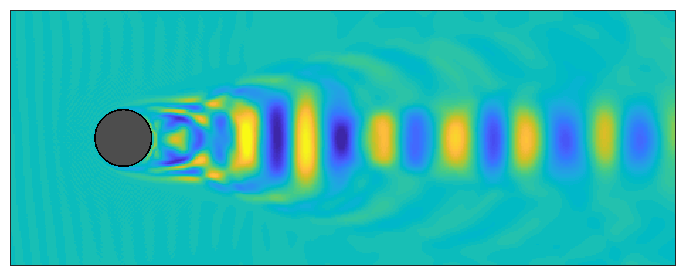}
    \end{minipage}
    \caption{(a)--(d)~Two-dimensional flow in a channel past a cylinder. Plotted are the vorticities. (e)--(h)~Three subdominant coherent modes associated with the two-dimensional flow, where the top row contains the coherent modes $ \xi $ and the bottom row the corresponding modes $ \eta $.}
    \label{fig:Karman}
\end{figure*}

For this standard DMD benchmark problem, which we chose for illustration purposes, CMD and (regularized) DMD lead to modes that look highly similar. The interpretations, however, are different. While the DMD modes, which correspond to eigenvectors, are objects that are mapped onto scalar multiples of themselves, the CMD modes, which correspond to singular vectors, encode information about how coherent structures at time $ t $ are transported by the flow to time $ t + \tau $. In fact, the DMD eigenvalues associated with the DMD modes resembling the CMD modes shown in Figure~\ref{fig:Karman} are negative and close to $ -1 $, implying periodicity. Analogously, the CMD modes $ \eta $ are akin to $ -\xi $, which also implies periodic motion. Further applications of CMD pertaining to, for instance, more complicated fluid flows or also non-sequential data, will be investigated in future work.

\section{Conclusion}
\label{sec:Conclusion}

We demonstrated that several kernel-based dimensionality reduction techniques can be interpreted as eigendecompositions of empirical estimates of certain RKHS operators. Moreover, we showed that applying CCA to Lagrangian data results in coherent sets and illustrated the efficiency of the methods using several examples ranging from fluid to molecular dynamics. This approach worked out of the box, although taking into account entire trajectories might improve the results even further, which would then necessitate dedicated kernels. In this work, we analyzed only low-dimensional benchmark problems. Nevertheless, the kernel-based algorithms can be easily applied to more complex problems and also non-vectorial domains such as graphs or strings.

As a byproduct of the coherent set analysis, we derived a method called CMD that is a hybrid of CCA and DMD (or TICA). This method can, for instance, be applied to high-dimensional fluid flow or video data. For specific problems, CMD and DMD---unsurprisingly, given the close proximity---result in highly similar modes. An interesting topic for future research would be to systematically analyze the relationships between these methods. Furthermore, as with the transfer operators and embedded transfer operators as well as their kernel-based estimates \cite{KSM17}, there are again several different combinations and variants of the proposed algorithms.

Another open problem is the influence of different regularization techniques on the numerical results. How does Tikhonov regularization compare to approaches based on pseudoinverses or other spectral filtering methods? And how do we choose the kernel and the regularization parameters in an optimal way, preferably without cross-validation? Additionally, future work includes analyzing the properties of the empirical estimate $ \widehat{S} $. Can we show convergence in the infinite-data limit? Which operators can be approximated by $ \widehat{S} $ and can we derive error bounds for the resulting eigenvalues and eigenfunctions?

We expect the results in this paper to be a starting point for further theoretical research into how RKHS operators in the context of dynamical systems could be approximated and, furthermore, how they connect to statistical learning theory. Additionally, the methods proposed here might be combined with classical modifications of CCA in order to improve the numerical performance.
The experiments here were performed using Matlab, and the methods have been partially reimplemented in Python and are available at \url{https://github.com/sklus/d3s/}.

\section*{Acknowledgements}

We would like to thank P\'eter Koltai for the Bickley jet implementation as well as helpful discussions related to coherent sets, Ingmar Schuster for pointing out similarities between CCA and kernel transfer operators, and the reviewers for many helpful suggestions for improvements. We gratefully acknowledge funding from Deutsche Forschungsgemeinschaft (DFG) through grant CRC 1114 (Scaling Cascades in Complex Systems, project ID: 235221301, projects A04 and B06), Germany's Excellence Strategy (MATH\texttt{+}: The Berlin Mathematics Research Center, EXC-2046/1, project ID: 390685689, projects AA1-2, AA1-6, and EF1-2), and European Commission through ERC CoG 772230 “ScaleCell”.

\bibliographystyle{unsrt}
\bibliography{KCA}

\end{document}